\newtheorem{proposition}{Proposition}[section]
  \newtheorem{theorem}[proposition]{Theorem}
  \newtheorem{lemma}[proposition]{Lemma}
\theoremstyle{definition}
  \newtheorem{definition}[proposition]{Definition}
  \newtheorem{remark}[proposition]{Remark}
\newcommand{\cst}{\ifmmode\mathrm{C}^*\else{$\mathrm{C}^*$}\fi}
\newcommand{\st}{\;\vline\;}
\newcommand{\CC}{\mathbb{C}}
\newcommand{\RR}{\mathbb{R}}
\newcommand{\GG}{\mathbb{G}}
\newcommand{\vtens}{\,\bar{\otimes}\,}
\newcommand{\id}{\mathrm{id}}
\newcommand{\I}{\mathds{1}}
\newcommand{\HH}{\mathbb{H}}
\newcommand{\sM}{\mathsf{M}}
\newcommand{\sN}{\mathsf{N}}
\newcommand{\sS}{\mathsf{S}}
\newcommand{\sB}{\mathsf{B}}
\newcommand{\sA}{\mathsf{A}}
\newcommand{\sH}{\mathsf{H}}
\newcommand{\hh}[1]{\widehat{#1}}
\newcommand{\flip}{\boldsymbol{\sigma}}
\newcommand{\ww}{\mathrm{W}}
\newcommand{\WW}{{\mathds{V}\!\!\text{\reflectbox{$\mathds{V}$}}}}
\newcommand{\Ww}{\mathds{W}}
\newcommand{\wW}{\text{\reflectbox{$\Ww$}}\:\!} 
\newcommand{\U}{\mathrm{U}}
\newcommand{\Uu}{\mathds{U}}
\newcommand{\starr}{\,\underline{*}\,}
\newcommand{\staru}{\,\overline{*}\,}
\DeclareMathOperator{\C}{C}
\DeclareMathOperator{\B}{B}
\DeclareMathOperator{\vN}{vN}
\DeclareMathOperator{\Mor}{Mor}
\DeclareMathOperator{\M}{M}
\DeclareMathOperator{\N}{N}
\DeclareMathOperator{\Dom}{Dom}
\DeclareMathOperator{\Aut}{Aut}
\DeclareMathOperator{\Linf}{\mathnormal{L}^\infty\;\!\!}
\DeclareMathOperator{\Ltwo}{\mathnormal{L}^2\;\!\!}
\numberwithin{equation}{section}
\def\labelitemi{$\blacktriangleright$}
\author{Ramin Faal}
\address{Department of Pure Mathematics, Ferdowsi University of Mashhad, Iran}
\email{faal.ramin@yahoo.com}
\author{Pawe{\l} Kasprzak}
\address{Department of Mathematical Methods in Physics, Faculty of Physics, University of Warsaw, Poland}
\email{pawel.kasprzak@fuw.edu.pl}
\title{Group-like projections for locally compact quantum groups}
\subjclass[2010]{Primary: 46L65 Secondary: 43A05, 46L30, 60B15}
\begin{document}

\begin{abstract}Let $\GG$ be a locally compact quantum group. 
We give a 1-1 correspondence between group-like projections in $\Linf(\GG)$  preserved by the scaling group    and idempotent states on the dual quantum group $\hh\GG$. As a byproduct we give a simple proof that normal integrable coideals in $\Linf(\GG)$ which are preserved by the scaling group are in 1-1 correspondence with compact quantum subgroups of $\GG$. 
\end{abstract}

\maketitle


\newlength{\sw}
\settowidth{\sw}{$\scriptstyle\sigma-\text{\rm{weak closure}}$}
\newlength{\nc}
\settowidth{\nc}{$\scriptstyle\text{\rm{norm closure}}$}
\newlength{\ssw}
\settowidth{\ssw}{$\scriptscriptstyle\sigma-\text{\rm{weak closure}}$}
\newlength{\snc}
\settowidth{\snc}{$\scriptscriptstyle\text{\rm{norm closure}}$}
\renewcommand{\labelitemi}{$\bullet$}
\section{Introduction}

 Let $G$ be a group, $X$  a non-empty subset of  $G$ and   $\I_X:G\to \{0,1\}$ its characteristic function. It is easy to check that  $X$ is a subgroup of $G$ if and only if 
\begin{equation}\label{glpemts0}\I_X(st)\I_X(t) = \I_X(s )\I_X(t)\end{equation} for all $s,t\in G$. 
Let $G$ be  a locally compact group    and  $\Delta: \Linf(G)\to \Linf(G)\vtens\Linf(G)$  the comultiplication on $\Linf(G)$: \[\Delta(f)(s,t) = f(st)\] for all $f\in\Linf(G)$. Suppose that  $P\in\Linf(G)$ is a non-zero group-like projection, i.e. $P$ satisfies   \begin{equation}\label{glpemts1}\Delta(P)(\I_G\otimes P) = P\otimes P.\end{equation}   
Equation \eqref{glpemts1} implies that $P$ is a continuous function on $G$ (see Lemma \ref{lemmamult}). Denoting   
\[X = \{s\in G: P(s) = 1\},\] 
  we have $P = \I_X$ and $\I_X$ satisfies \eqref{glpemts0}. In particular  $X$ is a subgroup of $G$ and the continuity of $\I_X$ implies that $X$ is open. Thus we get a 1-1 correspondence between open subgroups of $G$ and group-like projections in $\Linf(G)$. 

Let $G$ be a locally compact group. The Banach dual $ \C_0(G)^*$    of $\C_0(G)$ equipped  with the convolution product is a Banach algebra.  We say that a state $\omega \in\C_0(G)^*$ is an idempotent state on  $\C_0(G)$ if  $\omega*\omega = \omega$. In fact, as proved by Kelley \cite[Theorem 3.4]{Kelley}, there is a 1-1 correspondence between  idempotent states on $\C_0(G)$ and   compact  subgroups of $G$, where given a compact subgroup $H\subset G$  the corresponding state is of the form  $\omega(f) = \int_H f(h)dh$ for all $f\in \C_0(G)$. 

Let $G\ni g\mapsto R_g\in B(\Ltwo(G))$ be the right regular representation, $\vN(G)=\{R_g:g\in G\}''$    the group von Neumann algebra of $G$ and  $\hh\Delta:\vN(G)\to \vN(G)\vtens \vN(G)$  the comultiplication, where $\hh\Delta(R_g) = R_g\otimes R_g$ for all $g\in G$. It is not difficult to see that   $P = \int_H R_h dh\in \vN(G)$ is a group-like projection in $\vN(G)$, i.e. 
 it satisfies 
\[\hat\Delta(P)(\I\otimes P) = P\otimes P.\]
 Theorem \ref{glptoidst} and Kelley's result show  that all group-like projections in  $\vN(G)$ are of this form. In other words we have  a 1-1 correspondence between idempotent states on  $ \C_0(G)$ and group-like projections in $\vN(G)$.  Theorem \ref{lemma0} together with Theorem \ref{glptoidst} yield a    generalization of  this  correspondence to the context of  locally compact quantum groups. 

A locally compact quantum group  $\GG$ is a virtual object that is assigned with a von Neumann algebra    $\Linf(\GG)$   equipped with a comultiplication $\Delta:\Linf(\GG)\to \Linf(\GG)\vtens \Linf(\GG)$. A projection  $P\in\Linf(\GG)$ is called a  group-like projection if 
\[\Delta(P)(\I\otimes P) = P\otimes P.\]
A locally compact quantum group $\GG$ is also assigned with the $\C^*$-algebra  $\C_0(\GG)$  and universal $\C^*$-algebra $\C^u_0(\GG)$. Both Banach duals $\C^u_0(\GG)^*$ and $\C_0(\GG)^*$ are in fact   Banach algebras. We say that a state $\omega\in\C_0^u(\GG)^*$ is an idempotent state (on $\GG$) if $\omega*\omega = \omega$. 
As already mentioned,   our  results establish a 1-1 correspondence between idempotent states on $\GG$ and group-like projections  on the dual   $\hh\GG$ which are preserved by the scaling group of $\hh\GG$. 
As a byproduct of our study   we  get a relatively simple  proof that normal integrable coideals in $\Linf(\GG)$ which are preserved by the scaling group are in 1-1 correspondence with compact quantum subgroups of $\GG$. Our proof,  unlike the previous proof  \cite[Theorem 5.15]{coid_sub_st}, uses only the von Neumann techniques and does not invoke  the universal $\C^*$ - algebra $\C_0^u(\GG)$. 

\section{Preliminaries}
We will denote the minimal  tensor product of $\C^*$-algebras with the symbol $\otimes$. The ultraweak tensor product of von Neumann algebras  will be denoted by $\vtens$. For a $\C^*$-subalgebra $\sB$ of a $\C^*$-algebra the multipliers  $\M(\sA)$ of $\sA$, the norm closed linear span of the set $\big\{ba \st b\in\sB, a\in\sA \big\}$ will be denoted by $\sB\sA$. A morphism  between two $\C^*$-algebras $\sA$ and $\sB$ is a $*$-homomorphism $\pi$ from  $\sA$ into the multiplier algebra $\M(\sB)$,  which is  non-degenerate, i.e $\pi(\sA)\sB = \sB$. We will denote the set of all morphisms from $\sA$ to $\sB$ by $\Mor(\sA,\sB)$. The non-degeneracy of a morphism $\pi$ yields its  natural extension to the unital $*$-homomorphism $\M(\sA)\to \M(\sB)$ also denoted by $\pi$. Let $\sB$ be a $\C^*$-subalgebra of $\M(A)$. We say that $\sB$ is non-degenerate if $\sB\sA = \sA$.  In this case $\M(\sB)$ can  be identified with a $\C^*$-subalgebra of $\M(\sA)$.  The symbol $\flip$ will denote the flip morphism between tensor product of operator algebras. If $X\subset \sA$, where $\sA$ is a $\C^*$-algebra then  $ X ^{\textrm{\tiny{norm-cls}}}$ denotes the norm closure of the  linear span of $X$; if   $X\subset \sM$, where $\sM$ is a von Neumann algebra then $ X ^{ \textrm{\tiny{$\sigma$-weak cls}}}$ denotes  the $\sigma$-weak closure of the linear span of $X$.    For a $\C^*$-algebra $\sA$, the space of all functionals on $\sA$ and the state space of $\sA$ will be denoted by $\sA^*$ and $S(\sA)$ respectively. The predual of a von Neumann algebra $\sN$ will be denoted by $\sN_*$. For a Hilbert space $H$ the $\C^*$-algebras of compact   operators on $H$ will be denoted by $\mathcal{K}(H)$. The algebra of bounded operators acting on $H$ will be denoted by  $B(H)$. For $\xi, \eta\in H$, the symbol $\omega_{\xi, \eta}\in B(H)_*$ is the functional $T\mapsto \langle \xi,T\eta\rangle$.  

 For  the theory of locally compact quantum groups we refer to \cite{univ,KV,KVvN}. Let us recall that a von Neumann algebraic locally compact quantum group is a quadruple $\mathbb{G} = (\Linf(\GG), \Delta,\varphi,\psi)$, where $\Linf(\GG)$ is a von Neumann algebra with a coassociative comultiplication $\Delta\colon\Linf(\GG)\to\Linf(\GG)\vtens\Linf(\GG)$,  $\varphi$ and $\psi$ are, respectively,  normal semifinite  faithful left and right Haar weights on $\Linf(\GG)$.  The GNS Hilbert space of the right Haar weight $\psi_\GG$ will be denoted by  $\Ltwo(\GG)$ and the corresponding GNS map will be denoted by $\eta_\GG$. Let us recall that $\eta_\GG:\mathcal{N}_\psi\to \Ltwo(\GG)$, where $\mathcal{N}_\psi=\{x\in\Linf(\GG):\psi(x^*x)<\infty\}$.  The \emph{antipode}, the \emph{scaling group} and the \emph{unitary antipode} will be denoted by $S$, $(\tau_t)_{t\in \RR}$ and $R$. We have $S = R\circ\tau_{-\frac{i}{2}}$. Moreover, for all $a,b\in\mathcal{N}_\varphi$ the following holds (see \cite[Corollary 5.35]{KV})
\begin{equation}\label{stronleftinv} S((\id\otimes\varphi)(\Delta(a^*)(\I\otimes b))) = (\id\otimes\varphi)((\I\otimes a^*)\Delta(b)).\end{equation}
We will denote $(\sigma^\varphi_t)_{t\in\RR}$ and $(\sigma^{\psi}_t)_{t\in\RR}$ the \emph{modular automorphism groups} assigned to $\varphi$  and $\psi$ respectively. 

The multiplicative unitary $\ww^\GG\in B(\Ltwo(\GG)\otimes \Ltwo(\GG))$  is a unique unitary operator such that 
\[\ww^\GG(\eta_\GG(x)\otimes\eta_\GG(y)) = (\eta_\GG \otimes\eta_\GG )(\Delta_\GG(x)(\I\otimes y))\] 
for all $x,y\in D(\eta_\GG)$;  
 $\ww^\GG$  satisfies the pentagonal equation $\ww^\GG_{12}\ww^\GG_{13}\ww^\GG_{23} = \ww^\GG_{23}\ww^\GG_{12}$ \cite{BS,mu}. Using $\ww^\GG$,  $\GG$ can be recovered as follows:
\[\begin{split}
\Linf(\GG) =&\bigl\{ (\omega\otimes\id)\ww^\GG\st\omega\in B(\Ltwo(\GG))_*\bigr\} ^{\textrm{\tiny{$\sigma$-weak cls}}},\\
\Delta_\GG(x) =&\ww^\GG(x\otimes\I){\ww^\GG}^*.
\end{split}\] 
A locally compact quantum group admits a dual object $\hh\GG$. It can be described in terms of  
 $\ww^{\hh\GG}=\flip({\ww^{\GG}})^*$
\[\begin{split} 
\Linf(\hh\GG)&=\bigl\{( \omega\otimes\id)\ww^{\hh\GG}\st\omega\in B(\Ltwo(\GG))_*\bigr\}^{\textrm{\tiny{$\sigma$-weak cls}}},\\ \Delta_{\hh\GG}(x)&=\ww^{\hh\GG}(x\otimes\I){\ww^{\hh\GG}}^*.
 \end{split}\] Note that $\ww^\GG\in\Linf(\hh\GG)\bar\otimes\Linf(\GG)$. 
\begin{definition}\label{von_neumann}
A von Neumann subalgebra $\sN$ of $\Linf(\GG)$ is called 
\begin{itemize}
\item \emph{Left coideal}  if $\Delta_\GG(\sN)\subset\Linf(\GG)\vtens\sN$;
\item \emph{Invariant subalgebra} if $\Delta_\GG(\sN)\subset\sN\vtens\sN$;
\item \emph{Baaj-Vaes subalgebra} if $\sN$ is an invariant subalgebra of $\Linf(\GG)$ which is preserved by the unitary antipode $R$  and the scaling group $(\tau_t)_{t\in\RR} $ of $\GG$;
\item \emph{Normal} if $\ww^\GG(\I\otimes\sN){\ww^\GG}^*\subset\Linf(\hh\GG)\vtens\sN$;
\item \emph{Integrable} if the set of integrable elements with respect to the right Haar weight $\psi_\GG$ is dense in $\sN^+$; in other words, the restriction of $\psi_\GG$ to $\sN$ is semifinite.
\end{itemize}
\end{definition}

If $\sN$ is a coideal of $\Linf(\GG)$, then $\tilde\sN=\N^\prime\cap\Linf(\hh\GG)$ is a   coideal of $\Linf(\hh\GG)$ called the \emph{codual} of $\N$; it turns out that  $\tilde{\tilde\sN}=\sN$ (see \cite[Theorem 3.9]{embed}). 

The $\C^*$-algebraic version $(\C_0(\GG),\Delta_\GG)$ of a given quantum group $\GG$  is recovered from $\ww^\GG$ as follows 
\[\begin{split}\C_0(\GG) &=\bigl\{(\omega\otimes\id)\ww^\GG\st \omega\in B(\Ltwo(\GG))_*\bigr\}^{\textrm{\tiny{norm-cls}}},\\
\Delta_\GG(x) &=\ww^\GG(x\otimes\I){\ww^\GG}^*.
\end{split}\]
The comultiplication can be viewed as a morphism $\Delta_\GG\in \Mor(\C_0(\GG),\C_0(\GG)\otimes\C_0(\GG))$ and we have  $\ww^\GG\in\M(\C_0(\hh\GG)\otimes \C_0(\GG))$. Since $\M(\C_0(\hh\GG)\otimes \C_0(\GG))\subset \M(\mathcal{K}(\Ltwo(\GG))\otimes \C_0(\GG))$ we conclude that for all $x\in \Linf(\GG)$ 
\begin{equation}\Delta_\GG(x) = \ww^\GG(x\otimes\I){\ww^\GG}^*\in\M(\mathcal{K}(\Ltwo(\GG))\otimes \C_0(\GG)).\end{equation}
Replacing $\Delta_\GG$ with $\Delta_{\GG^{\textrm{op}}}$ we also get that 
\begin{equation}\label{cont2} \Delta_\GG(x) \in \M(\C_0(\GG)\otimes \mathcal{K}(\Ltwo(\GG))) \end{equation} for all $x\in\Linf(\GG)$. 
 
Let $\sH$ be a Hilbert space and $\U\in\M(\C_0(\GG)\otimes\mathcal{K}(\sH))$ a unitary. We say that $\U$ is a representation of $\GG$ on $\sH$ if \[(\Delta_\GG\otimes\id)(\U) = \U_{13}\U_{23}.\] 
Let us recall the definition of an action  of a quantum group  $\GG$ on a von Neumann algebra.
\begin{definition}\label{def:action_v_C} A \emph{(left) action} of quantum group $\GG$ on a 
 von Neumann algebra $\sN$ is a unital injective normal $*$-homomorphism  $\alpha:\sN\to\Linf(\GG)\vtens\sN$ s.t. $(\Delta_\GG\otimes\id)\circ\alpha=(\id\otimes\alpha)\circ\alpha$. If $\sM\subset \sN$ is a von Neumann subalgebra then we say that $\sM$ is preserved by $\alpha$ if $\alpha(\sM)\subset \Linf(\GG)\vtens\sM$. 
\end{definition} 
Given an action $\alpha:\sN\to\Linf(\GG)\vtens\sN$ we have (see \cite[Corollary 2.6]{embed})   \[\sN = \{(\mu\otimes\id)\alpha(x):x\in\sN,\mu\in\Linf(\GG)_*\}^{ \textrm{\tiny{$\sigma$-weak cls}}}\] which will be referred to as {\it the Podle\'s condition}. 
We can always find  a unitary representation $\U\in\M(\C_0(\GG)\otimes\mathcal{K}(\sH))$ on a Hilbert space $\sH$ and a normal faithful $*$-homomorphism $\pi:\sN\to \B(\sH)$ such that 
\[(\id\otimes\pi)(\alpha(x)) =\U^*(\I\otimes\pi(x) )\U.\] In this case we shall say that $\U$ implements the action $\alpha$. For the construction of the canonical implementation see \cite{VaesCan}.

A locally compact quantum group $\GG$ is assigned with a universal version \cite{univ}. The universal version $\C_0^u(\GG)$  of $\C_0(\GG)$    is equipped  with a comultiplication $\Delta_\GG^u \in\Mor(\C_0^u(\GG),\C_0^u(\GG)\otimes \C_0^u(\GG))$. 
The \emph{counit} is a $*$-homomorphism $\varepsilon: \C_0^u(\GG)\to\CC$ satisfying $(\id\otimes\varepsilon)\circ\Delta_\GG^u=\id=(\varepsilon\otimes\id)\circ\Delta_\GG^u$. Multiplicative unitary $\ww^\GG\in\M(\C_0(\hh\GG)\otimes\C_0(\GG))$ admits the universal lift  $\WW^\GG\in\M(\C_0^u(\hh\GG)\otimes \C_0^u(\GG))$.  The reducing morphisms for $\GG$ and $\hh\GG$ will be denoted by $\Lambda_\GG\in\Mor(\C_0^u(\GG),\C_0(\GG))$ and $\Lambda_{\hh\GG}\in\Mor(\C_0^u(\hh\GG),\C_0(\hh\GG))$  respectively. We have
$(\Lambda_{\hh\GG}\otimes\Lambda_\GG)(\WW^\GG)=\ww^\GG$. 
We shall also use the half-lifted versions of $\ww^\GG$, $\Ww^\GG=(\id\otimes\Lambda_\GG)(\WW^\GG)\in\M(\C_0^u(\hh\GG)\otimes \C_0(\GG))$ and $\wW^\GG=(\Lambda_{\hh\GG}\otimes\id)(\WW^\GG )\in\M(\C_0(\hh\GG)\otimes \C_0^u(\GG))$. They satisfy the appropriate versions of pentagonal equation 
\[\begin{split}
\Ww^\GG_{12}\Ww^\GG_{13}\ww^\GG_{23}&=\ww^\GG_{23}\Ww^\GG_{12},\\
\ww^\GG_{12}\wW^\GG_{13}\wW^\GG_{23}&=\wW^\GG_{23}\ww^\GG_{12}.
\end{split}\]
The half-lifted versions of comultiplication  is denoted by  $\Delta_r^{r,u}\in\Mor(\C_0(\GG),\C_0(\GG)\otimes \C_0^u(\GG))$ 
and 
$\hh{\Delta}_r^{r,u}\in\Mor(\C_0(\hh\GG),\C_0(\hh\GG)\otimes \C_0^u(\hh\GG))$, e.g.  
\[\begin{split}
\Delta_r^{r,u}(x)&=\wW^\GG(x\otimes\I){\wW^\GG}^*,~~~~~x\in\C_0(\GG).
\end{split} \]
  We have 
\begin{equation}\label{LDDrL}
\begin{split}
(\Lambda_\GG\otimes\id)\circ\Delta_\GG^u &= \Delta_r^{r,u}\circ\Lambda_\GG,\\
(\Lambda_{\hh\GG}\otimes\id)\circ\Delta_{\hh\GG}^u &= \hh{\Delta}_r^{{r,u}} \circ\Lambda_{\hh\GG}.
\end{split}
\end{equation}
If $\U\in\M(\C_0(\GG)\otimes\mathcal{K}(\sH))$ is a unitary representation of $\GG$ on a Hilbert space then there exists a unique unitary $\Uu\in\M(\C^u_0(\GG)\otimes\mathcal{K}(\sH))$ such that $\U = (\Lambda_\GG\otimes\id)(\Uu)$ and   \[(\Delta^u\otimes\id)(\Uu) = \Uu_{13}\Uu_{23}.\]
Actually $ \Uu_{23} = \U^*_{13}( \Delta_r^{r,u}\otimes\id)(\U)$. 

 Given a locally compact quantum group $\GG$, the comultiplications $\Delta_\GG$ and $\Delta_\GG^u$ induce Banach algebra structures on $\Linf(\GG)_*$ and $\C_0^u(\GG)^*$ respectively.  The corresponding multiplications will be denoted by $\starr$ and $\staru$. We shall identify $\Linf(\GG)_*$ with a subspace of $\C_0^u(\GG)^*$ when convenient. Under this identification $\Linf(\GG)_*$ forms a two sided ideal in $\C_0^u(\GG)^*$.   Following \cite{univ},  for any $\mu\in\C_0^u(\GG)^*$ we   define a normal map $ \Linf(\GG)\to \Linf(\GG)$ such that $ x\mapsto(\id\otimes\mu)(\wW^\GG(x\otimes\I){\wW^\GG}^*)$ for all $x\in\Linf(\GG)$. We shall use a notation $\mu\staru x=  (\id\otimes\mu)(\wW^\GG(x\otimes\I){\wW^\GG}^*)$.

\begin{theorem}\label{thm_gen_el}
Let $\sN$ be a von Neumann algebra and $\alpha:\sN \to \Linf(\GG)\vtens\sN$  an action of $\GG$ on $\sN$. Let $x\in \sN$, $x^* = x$ and 
\[\sN_x=\{(\mu\otimes \id)(\alpha(x)):\mu\in\Linf(\GG)_*\}''.\] Then $\sN_x$ is the smallest unital von Neumann  subalgebra of  $\sN$ preserved by $\GG$ and containing $x$.
\end{theorem}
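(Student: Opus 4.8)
The plan is to verify four things: that $\sN_x$ is a unital von Neumann subalgebra of $\sN$, that $\sN_x$ is $\GG$-invariant, that $x\in\sN_x$, and that $\sN_x$ is contained in every unital $\GG$-invariant von Neumann subalgebra of $\sN$ containing $x$. The first is immediate, since the generators of $\sN_x$ lie in $\sN$ and $\sN$ is a von Neumann algebra. The last is also easy: if $\sM\subseteq\sN$ is a unital von Neumann subalgebra with $\alpha(\sM)\subseteq\Linf(\GG)\vtens\sM$ and $x\in\sM$, then $\alpha(x)\in\Linf(\GG)\vtens\sM$, so $(\mu\otimes\id)(\alpha(x))\in\sM$ for every $\mu\in\Linf(\GG)_*$, and as these elements generate $\sN_x$ we get $\sN_x\subseteq\sM$.

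For invariance, fix $\mu\in\Linf(\GG)_*$. Since $\mu$ and $\alpha$ act on disjoint tensor legs of $(\id\otimes\alpha)(\alpha(x))$, the action identity $(\Delta\otimes\id)\circ\alpha=(\id\otimes\alpha)\circ\alpha$ gives
\[\alpha\big((\mu\otimes\id)(\alpha(x))\big)=(\mu\otimes\id\otimes\id)\big((\Delta\otimes\id)(\alpha(x))\big)=:z_\mu\in\Linf(\GG)\vtens\sN.\]
Slicing the surviving $\Linf(\GG)$-leg of $z_\mu$ by an arbitrary $\nu\in\Linf(\GG)_*$ yields $(\nu\otimes\id)(z_\mu)=\big((\mu\starr\nu)\otimes\id\big)(\alpha(x))$, which is one of the generators of $\sN_x$; as the remaining slices of $z_\mu$ lie in $\Linf(\GG)$ automatically, the slice map characterisation of $\Linf(\GG)\vtens\sN_x$ inside $\Linf(\GG)\vtens\sN$ (Tomiyama) gives $z_\mu\in\Linf(\GG)\vtens\sN_x$. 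Because $x^*=x$, the generating set $\{(\mu\otimes\id)(\alpha(x)):\mu\in\Linf(\GG)_*\}$ is self-adjoint, so the $*$-algebra it generates is $\sigma$-weakly dense in $\sN_x$; since $\alpha$ is a normal $*$-homomorphism carrying each generator into the von Neumann algebra $\Linf(\GG)\vtens\sN_x$, we conclude $\alpha(\sN_x)\subseteq\Linf(\GG)\vtens\sN_x$.

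The remaining point, $x\in\sN_x$, is the heart of the matter and the step I expect to cause the main difficulty. Since each $(\mu\otimes\id)(\alpha(x))$ is a generator of the $\sigma$-weakly closed algebra $\sN_x$, it suffices to produce a net $(\mu_i)$ in $\Linf(\GG)_*$ with $(\mu_i\otimes\id)(\alpha(x))\to x$ $\sigma$-weakly. This is immediate when $\GG$ is co-amenable, a bounded approximate identity for $\Linf(\GG)_*$ doing the job at once. In general the natural approach is to pass to an implementing picture --- a unitary representation $\U\in\M(\C_0(\GG)\otimes\mathcal{K}(\sH))$ and a normal faithful $*$-homomorphism $\pi\colon\sN\to\B(\sH)$ with $(\id\otimes\pi)(\alpha(y))=\U^*(\I\otimes\pi(y))\U$ --- together with its universal lift $\Uu\in\M(\C_0^u(\GG)\otimes\mathcal{K}(\sH))$: applying $(\id\otimes\eps\otimes\id)$ to $(\Delta^u\otimes\id)(\Uu)=\Uu_{13}\Uu_{23}$ and cancelling the unitary $\Uu$ gives $(\eps\otimes\id)(\Uu)=\I$, so that $(\eps\otimes\id)(\Uu^*(\I\otimes\pi(x))\Uu)=\pi(x)$ because $\eps$ is a character; thus $x$ is recovered from $\alpha(x)$ by the universal counit. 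One then has to realise $\eps$ through $\Linf(\GG)_*$, extracting the desired net from an approximate unit for $\C_0(\GG)$ transported through $\U$; the delicate point is that $\eps$ need not descend to $\C_0(\GG)$. With $x\in\sN_x$ in hand, the invariance and minimality proved above complete the proof.
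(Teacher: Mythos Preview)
Your first, second and fourth steps are correct and match the paper's argument. The gap is in the third step, $x\in\sN_x$, and it is a real gap: you correctly identify that slicing the universal implementation $\Uu^*(\I\otimes\pi(x))\Uu$ by the counit $\eps$ recovers $\pi(x)$, but you do not explain how to ``realise $\eps$ through $\Linf(\GG)_*$''. Your own caveat---that $\eps$ need not descend to $\C_0(\GG)$---is exactly the obstruction, and nothing you write after it removes that obstruction. There is no general reason why $\eps$ should be a weak${}^*$ limit of normal functionals in a way that makes $(\mu_i\otimes\id)(\alpha(x))\to x$; in the non--coamenable case this is precisely what fails at the level of $\C_0(\GG)$.

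The paper sidesteps the approximation problem entirely. It introduces the auxiliary von Neumann algebra
\[
\sM=\bigl\{(\mu\otimes\id)\bigl(\Uu^*(\I\otimes x)\Uu\bigr):\mu\in\C_0^u(\GG)^*\bigr\}''
\]
which visibly contains $x$ (take $\mu=\eps$) and contains $\sN_x$. One then checks, using the half-lifted comultiplication, that for $\omega\in\Linf(\GG)_*$ and $\mu\in\C_0^u(\GG)^*$
\[
(\omega\otimes\id)\Bigl(\alpha\bigl((\mu\otimes\id)(\Uu^*(\I\otimes x)\Uu)\bigr)\Bigr)=(\mu\staru\omega\otimes\id)(\alpha(x)).
\]
The key point is now purely algebraic: $\Linf(\GG)_*$ is a two-sided \emph{ideal} in $\C_0^u(\GG)^*$, so $\mu\staru\omega\in\Linf(\GG)_*$ and the right-hand side lies in $\sN_x$. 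Hence $\sM$ is $\GG$-invariant, and the Podle\'s condition then forces $\sM$ to be generated by such slices, giving $\sM\subset\sN_x$ and therefore $x\in\sN_x$. The ideal property of $\Linf(\GG)_*$ inside $\C_0^u(\GG)^*$ is the missing ingredient in your argument: it lets one use the universal counit without ever having to approximate it by normal functionals.
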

\begin{proof}
Let us consider 
\[\sS =\{(\mu\otimes \id)(\alpha(x)):\mu\in\Linf(\GG)_*\}.\]
Then $\sS$ forms a selfadjoint subset of $\sS$. In particular $\sN_x$ is (unital) von Neumann algebra generated by $\sS$. 
Noting that 
\[
    (\omega_1\otimes\id)(\alpha((\omega_2\otimes \id)\alpha(x))) = (\omega_2\starr\omega_1\otimes\id)(\alpha(x))\in\sN_x
\] we conclude that $\sN_x$ is preserved by $\GG$.

Every   $\sM\subset \sN$ preserved by $\GG$ and containing $x$ must contain $\sN_x$, so it remains to prove that $x\in\sN_x$. For this we may assume that $\sN\subset \B(\sH)$ and $\alpha$ is implemented by a unitary  representation $\U\in\M(\C_0(\GG)\otimes \mathcal{K}(\sH))$
\[\alpha(x) = \U^*(\I\otimes x)\U.\] 
Unitary implementation enables us to  define a   morphism $\alpha_0\in\Mor(\mathcal{K}(\sH),\C^u_0(\GG)\otimes \mathcal{K}(\sH))$, where $\alpha_0(x) = \U^*(\I\otimes x)\U$. 
Thus, using natural extension of the morphism $\alpha_0$ to  $\B(\sH) =\M(\mathcal{K}(\sH))$  we can further extend $\alpha$ to an action  on $\B(\sH)$ and 
  we shall assume in what follows that $\sN = \B(\sH)$.
As the conclusion of the above observation we see that,  given a $\C^*$-algebra $\sB$, an element $X\in\M(\sB\otimes \mathcal{K}(\sH))$ and a functional $\mu\in\sB^*$ we have \begin{equation}\label{cont0}
\alpha((\mu\otimes \id)(X)) = (\mu\otimes\id\otimes\id)((\id\otimes\alpha)(X)).
\end{equation}

Let $\Uu\in\M(\C^u_0(\GG)\otimes \mathcal{K}(H))$ be the universal lift of $\U$. Let us note that 
\[\sM := \{(\mu\otimes \id)(\Uu^*(\I\otimes x)\Uu):\mu\in\C_0^u(\GG)^*\}''\]
is a von Neumann subalgebra of $\B(\sH)$ containing $x$ (for the latter take $\mu = \varepsilon$) and $\sN_x\subset \sM$.  Furthermore, for every $\omega\in\Linf(\GG)_*$ we have 
\begin{equation}\label{cont}
    (\omega\otimes\id)(\alpha((\mu\otimes \id)(\Uu^*(\I\otimes x)\Uu)) = (\mu\staru\omega\otimes\id)(\alpha(x))\in\sN_x\subset\sM
\end{equation} where we used \eqref{cont0}. This shows that $\sM$ is preserved by $\GG$ (note that for the proof of the containment "$\in$" in Eq. \eqref{cont} we use $\mu\staru\omega\in\Linf(\GG)_*$). Since the action of $\GG$ on $\sM$ satisfies the Podle\'s condition, $\sM$   is generated by elements of the form $(\mu\staru\omega\otimes\id)(\alpha(x))$, $\mu\in\C_0^u(\GG)^*$, $\omega\in\Linf(\GG)_*$. Since $\mu\staru\omega\in\Linf(\GG)_*$, we conclude that $\sM\subset \sN_x$ and in particular $x\in\sN_x$. 
\end{proof}
\begin{remark}
If in the context of Theorem \ref{thm_gen_el} we start with a not necessary self-adjoint  $x\in\sN$, then the smallest von Neumann subalgebra of $\sN$ containing $x$ is given by \[\sN_x=\{(\mu\otimes \id)(\alpha(x)), (\mu\otimes \id)(\alpha(x^*)):\mu\in\Linf(\GG)_*\}''.\]
\end{remark}
\begin{definition}\label{Ggen}
Let $\sN$ be a von Neumann algebra with an action $\alpha:\sN\to\Linf(\GG)\vtens\sN$ of a locally compact quantum group $\GG$ and let $x\in\sN$. We say that $\sN$ is $\GG$-generated by  $x$  if $\sN_x = \sN$.  
\end{definition}
A state $\omega\in S(\C_0^u(\GG))$ is said to be an {\it idempotent state} if $\omega\staru\omega=\omega$. For a nice survey   describing the history and motivation behind the study of idempotent states see \cite{Salmi_Survey}.  For the theory of idempotent state we refer   to \cite{SaS}.  We shall use \cite[Proposition 4]{SaS} which in particular states that an  idempotent state  $\omega\in S(\C_0^u(\GG))$ is preserved by the universal scaling group $\tau_t^u$ and the universal unitary antipode $R^u:\C_0^u(\GG)\to\C_0^u(\GG)$, i.e.
\begin{equation}\label{prescgr}
\omega\circ\tau_t^u = \omega=\omega\circ R^u 
\end{equation}
for all $t\in\mathbb{R}$. An idempotent  state $\omega\in S(\C_0^u(\GG))$ yields a conditional expectation $E_\omega:\C_0(\GG)\to\C_0(\GG),$ (see \cite{SaS})
\[E_\omega(x) =\omega\staru x\] for all $x\in\C_0(\GG)$. Using \eqref{prescgr}, we easily get 
\begin{equation}\label{prescgr1}\tau_t(E_\omega(x)) = E_\omega(\tau_t(x)).\end{equation}
Conditional expectation extends to $E_\omega:\Linf(\GG)\to\Linf(\GG)$ and clearly  \eqref{prescgr1} holds for all $x\in\Linf(\GG)$. The image $\sN = E_\omega(\Linf(\GG))$ of $E_\omega$ forms a coideal in $\Linf(\GG)$.

Let $\HH$ and $\GG$ be  locally compact quantum groups. A morphism $\pi\in\Mor(\C_0^u(\GG),\C_0^u(\HH))$ such that 
\[
(\pi\otimes\pi)\circ\Delta_\GG^u=\Delta_\HH^u\circ\pi
\] is said to define a homomorphism from $\HH$ to $\GG$. If $\pi(\C_0^u(\GG))=\C_0^u(\HH)$, then $\HH$ is called  \emph{Woronowicz - closed  quantum subgroup} of $\GG$ \cite{DKSS}. A homomorphism from $\HH$ to $\GG$ admits the dual homomorphism  ${\hh\pi}\in\Mor(\C_0^u(\hh\HH), \C_0^u(\hh\GG))$ such that 
\[
(\id\otimes\pi)(\WW^\GG) = ({\hh\pi}\otimes\id)(\WW^\HH).
\]
A homomorphism  from  $\HH$ to $\GG$ identifies $\HH$ as a  \emph{closed quantum subgroup} of $\GG$ if there exists an injective  normal unital $*$-homomorphism $\gamma:\Linf(\hh\HH)\to\Linf(\hh\GG)$ such that 
\[
\Lambda_{\hh\GG}\circ{\hh\pi}(x) = \gamma\circ\Lambda_{\hh\HH}(x)
\] for all $x\in\C_0^u(\hh\HH)$. 
Let $\HH$ be a closed quantum subgroup of $\GG$, then $\HH$ acts on $\Linf(\GG)$   (in the von Neumann algebraic sense) by the following formula 
\[
 \alpha:\Linf(\GG)\to\Linf(\GG)\vtens\Linf(\HH),~~~ x\mapsto V(x\otimes \I)V^*,
 \]
where \begin{equation}\label{eq:def_V}V =(\gamma\otimes\id)(\ww^\HH).\end{equation} The fixed point space of $\alpha$  is denoted by  \[\Linf(\GG/\HH) =\big\{x\in\Linf(\GG)\st \alpha(x)=x\otimes\I \big\}\]  and referred to as the algebra of bounded functions on the quantum homogeneous space $\GG/\HH$. If $\HH$ is  a compact quantum subgroup of $\GG$, then there is a conditional expectation $E:\Linf(\GG)\to\Linf(\GG)$ onto  $\Linf(\GG/\HH)$ which is defined by 
\begin{equation}\label{eq:def_ce_H} E=(\id\otimes\psi_\HH)\circ\alpha,\end{equation} where $\psi_\HH$ is the Haar state of $\HH$. 

According to [9, Definition 2.2] we
say that $\HH$ is an  \emph{open quantum subgroup} of $\GG$ if there is a surjective normal $*$-homomorphism
$\rho:\Linf(\GG)\to\Linf(\HH)$  such that 
\[\Delta_\HH \circ\rho= (\rho\otimes\rho)\circ\Delta_\GG.
\]
Every open quantum subgroup is closed \cite[Theorem 3.6]{KKS}. We recall that a projection $P\in\Linf(\GG)$ is a  \emph{group-like projection} if $\Delta_\GG(P)(\I\otimes P)=P\otimes P$. Note that \eqref{cont2} implies that $\Delta_\GG(P)(\I\otimes P)\in\M(\C_0(\GG)\otimes \mathcal{K}(\Ltwo(\GG))$. In particular we have  
\begin{lemma}\label{lemmamult}
Let $P\in\Linf(\GG)$ be a group-like projection. Then $P\in\M(\C_0(\GG))$. 
\end{lemma}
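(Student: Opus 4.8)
The plan is to exploit the observation already recorded just before the statement, namely that $\Delta_\GG(P)(\I\otimes P)\in\M(\C_0(\GG)\otimes\mathcal K(\Ltwo(\GG)))$ by \eqref{cont2}, together with the group-like identity $\Delta_\GG(P)(\I\otimes P)=P\otimes P$. Combining these gives $P\otimes P\in\M(\C_0(\GG)\otimes\mathcal K(\Ltwo(\GG)))$. The task is then to descend from this two-variable membership to the one-variable conclusion $P\in\M(\C_0(\GG))$, i.e.\ to show that $Pa$ and $aP$ lie in $\C_0(\GG)$ for every $a\in\C_0(\GG)$.

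First I would fix a functional $\omega\in B(\Ltwo(\GG))_*$ which does not annihilate $P$; since $P$ is a non-zero projection in $B(\Ltwo(\GG))$ such a functional exists, and we may normalise so that $\omega(P)=1$ (e.g.\ take a vector state for a unit vector in the range of $P$). Now apply the slice map $\id\otimes\omega$ to the element $P\otimes P\in\M(\C_0(\GG)\otimes\mathcal K(\Ltwo(\GG)))$. Slicing a multiplier of $\C_0(\GG)\otimes\mathcal K(\Ltwo(\GG))$ by a normal functional on the second leg produces a multiplier of $\C_0(\GG)$: concretely, for $a\in\C_0(\GG)$ one has $(\id\otimes\omega)\big((a\otimes\I)(P\otimes P)\big)=\omega(P)\,aP=aP$, and the left-hand side lies in $\C_0(\GG)$ because $(a\otimes\I)(P\otimes P)\in\C_0(\GG)\otimes\mathcal K(\Ltwo(\GG))$ (the product of an element of the algebra with a multiplier) and slicing $\C_0(\GG)\otimes\mathcal K(\Ltwo(\GG))$ by $\omega$ lands in $\C_0(\GG)$. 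The same computation with $(P\otimes P)(a\otimes\I)$ gives $Pa\in\C_0(\GG)$. Since $a\in\C_0(\GG)$ was arbitrary, $P$ is a two-sided multiplier of $\C_0(\GG)$, i.e.\ $P\in\M(\C_0(\GG))$.

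The step I expect to require the most care is the justification that $(\id\otimes\omega)$ maps $\M(\C_0(\GG)\otimes\mathcal K(\Ltwo(\GG)))$ into $\M(\C_0(\GG))$ in the sense used above — that is, checking that after multiplying by an element $a\otimes\I$ of $\C_0(\GG)\otimes\I\subset\M(\C_0(\GG)\otimes\mathcal K(\Ltwo(\GG)))$ one genuinely lands in the minimal tensor product $\C_0(\GG)\otimes\mathcal K(\Ltwo(\GG))$ and not merely its multiplier algebra, so that the normal slice $\id\otimes\omega$ is legitimately defined and takes values in $\C_0(\GG)$. This is standard multiplier-algebra bookkeeping: $\mathcal K(\Ltwo(\GG))$ is an ideal in $B(\Ltwo(\GG))$, so $P\in B(\Ltwo(\GG))=\M(\mathcal K(\Ltwo(\GG)))$ and $a\in\C_0(\GG)$ together give $(a\otimes\I)(P\otimes P)\in\C_0(\GG)\otimes\mathcal K(\Ltwo(\GG))$, whence applying $\id\otimes\omega$ with $\omega\in\mathcal K(\Ltwo(\GG))^*$ yields an element of $\C_0(\GG)$. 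Once this is in place the proof is a two-line computation, and no further input about the quantum group structure is needed beyond \eqref{cont2}, which was already established.
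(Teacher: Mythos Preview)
Your approach is exactly what the paper has in mind: the authors give no proof at all, writing only ``In particular we have'' after the observation that $\Delta_\GG(P)(\I\otimes P)\in\M(\C_0(\GG)\otimes\mathcal{K}(\Ltwo(\GG)))$, so you are spelling out the step they leave implicit.

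There is, however, a slip in the detailed justification. You assert that $(a\otimes\I)(P\otimes P)\in\C_0(\GG)\otimes\mathcal{K}(\Ltwo(\GG))$ as ``the product of an element of the algebra with a multiplier,'' but $a\otimes\I$ is \emph{not} an element of $\C_0(\GG)\otimes\mathcal{K}(\Ltwo(\GG))$; it is only a multiplier. So you are multiplying two multipliers, which a priori gives only a multiplier. Concretely $(a\otimes\I)(P\otimes P)=aP\otimes P$, and if $P$ has infinite rank (so $P\notin\mathcal{K}(\Ltwo(\GG))$) there is no reason this simple tensor should lie in the minimal tensor product. Your fallback appeal to ``$P\in\M(\mathcal{K}(\Ltwo(\GG)))$ and $a\in\C_0(\GG)$'' does not help either, since $aP\in\C_0(\GG)$ is precisely the conclusion you are after.

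The repair is immediate and keeps your strategy intact: either invoke the standard fact that for $\omega\in\sB^*$ the slice $\id\otimes\omega$ extends to a strictly continuous map $\M(\sA\otimes\sB)\to\M(\sA)$ and apply it directly to $P\otimes P$; or argue elementarily by multiplying instead by $a\otimes k$ with $k\in\mathcal{K}(\Ltwo(\GG))$, so that $(a\otimes k)(P\otimes P)=aP\otimes kP$ genuinely lies in $\C_0(\GG)\otimes\mathcal{K}(\Ltwo(\GG))$, then slice with any $\omega\in\mathcal{K}(\Ltwo(\GG))^*$ satisfying $\omega(kP)\neq 0$ (e.g.\ take $k$ a rank-one projection onto a unit vector in the range of $P$ and $\omega$ the corresponding vector state).
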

 There is a 1-1 correspondence between (isomorphism  classes of) open quantum subgroups of $\GG$ and  central group-like projections in $\GG$ \cite[Theorem 4.3]{KKS}. The group-like projection assigned to $\HH$, i.e.  the central support of $\rho$, will be denoted by $\I_\HH$. 

\section{From idempotent states to  group-like projections}
Let $\GG$ be a locally compact quantum group and  $\omega\in\C_0^u(\GG)^*$   an idempotent state on $\GG$ and let $E_\omega:\Linf(\GG)\to\Linf(\GG)$ be the conditional expectation assigned to $\omega$:
\[E_\omega(x) = \omega\staru x.\] We note that 
\begin{align*}\eta_\GG(E_\omega(x)) &= \eta_\GG(\omega\staru x)\\ &=(\id\otimes\omega)(\wW)\eta_\GG(x),
\end{align*} where in the last equality we use \cite[Proposition 7.4]{int}. 
The element  $(\id\otimes\omega)(\wW)\in\Linf(\hh\GG)$ is a hermitian projection which we denote by $P_\omega$. In particular 
\begin{equation}\label{Pom}\eta_\GG(E_\omega(x)) = P_\omega\eta_\GG(x).\end{equation}
 Let $\sN = E_\omega(\Linf(\GG))$ be the coideal assigned to $\omega$. The set
\begin{equation}\label{dense}E_\omega(\{( \mu\otimes\id)(\ww):\mu\in\Linf(\hh\GG)_*\})=\{(P_\omega\cdot\mu\otimes\id)(\ww):\mu\in\Linf(\hh\GG)_*\}\end{equation} is weakly dense in $\sN$. 

Let us recall that $\tilde{\sN}\subset\Linf(\hh\GG)$ denotes the codual coideal of $\sN$. Since $\sN$ is preserved by $\tau^\GG$, $\tilde\sN$ is preserved by $\tau^{\hh\GG}$.
\begin{theorem}\label{lemma0}
Adopting the above notation we have 
\[\sN = \{x\in\Linf(\GG):P_\omega x = x P_\omega\}\] and 
\[\tilde{\sN} = \{y\in\Linf(\hh\GG):\Delta_{\hh\GG}(y)(\I\otimes P_\omega) = y\otimes P_\omega\}.\]
Moreover, $P_\omega \in\tilde{\sN}$ is a minimal central projection of $\tilde{\sN}$ and it satisfies 
\begin{itemize}
\item $\tau^{\hh\GG}_t(P_\omega) = P_\omega$ for all $t\in\mathbb{R}$;
\item $R^{\hh\GG}(P_\omega) = P_{\omega}$;
\item $\sigma^{\hh\psi}_t(P_\omega) = P_\omega$ for all $t\in\mathbb{R}$;
\item $\sigma^{\hh\varphi}_t(P_\omega) = P_\omega$ for all $t\in\mathbb{R}$;
\item $\Delta_{\hh\GG}(P_\omega)(\I\otimes P_\omega) = P_\omega\otimes P_\omega=\Delta_{\hh\GG}(P_\omega)(P_\omega\otimes \I)$.
\end{itemize}
\end{theorem}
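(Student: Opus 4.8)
The plan is to build everything from the key identity $\eta_\GG(E_\omega(x)) = P_\omega\,\eta_\GG(x)$ in \eqref{Pom} together with the fact that $E_\omega$ is a conditional expectation with image $\sN$, so that $P_\omega$ is the orthogonal projection onto $\overline{\eta_\GG(\sN\cap\mathcal{N}_\psi)}$. First I would establish the description of $\sN$. Since $E_\omega$ is a normal conditional expectation onto $\sN$, an element $x\in\Linf(\GG)$ lies in $\sN$ iff $E_\omega(x)=x$; implementing $E_\omega$ on $\Ltwo(\GG)$ via \eqref{Pom} and using that the right regular representation makes $\Linf(\GG)$ act with $\eta_\GG(x)$ ranging over a core, one sees $E_\omega(x)=x$ is equivalent to $x$ commuting with the projection $P_\omega$ — this is the standard fact that a conditional expectation onto a coideal is implemented by compression by $P_\omega\in\Linf(\hh\GG)$, and that the range is the relative commutant $\{P_\omega\}'\cap\Linf(\GG)$. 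Concretely, $xP_\omega\eta_\GG(y)=P_\omega\eta_\GG(y)$-type manipulations together with Podleś density \eqref{dense} give both inclusions.

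Next I would identify $\tilde\sN$. By definition $\tilde\sN = \sN'\cap\Linf(\hh\GG)$. Since $\sN = \{P_\omega\}'\cap\Linf(\GG)$, taking commutants and intersecting with $\Linf(\hh\GG)$ should produce the von Neumann algebra generated by $P_\omega$ inside $\Linf(\hh\GG)$, relativized appropriately; more usefully, I would directly show that $P_\omega\in\tilde\sN$ (i.e. $P_\omega$ commutes with $\sN$, which is immediate since $\sN\subset\{P_\omega\}'$) and then characterize membership in $\tilde\sN$ by the coideal equation. The cleanest route to the displayed formula for $\tilde\sN$ is: $y\in\tilde\sN$ iff $y$ commutes with $\sN$, and then use the codual being a coideal of $\hh\GG$ together with the group-like relation for $P_\omega$ (proved below) to translate "$y$ commutes with $P_\omega$ and with enough of $\sN$" into $\Delta_{\hh\GG}(y)(\I\otimes P_\omega)=y\otimes P_\omega$. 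Here I would lean on $\tilde{\tilde\sN}=\sN$ and the known correspondence between coideals and their coduals from \cite{embed}.

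For the list of properties: the group-like relation $\Delta_{\hh\GG}(P_\omega)(\I\otimes P_\omega)=P_\omega\otimes P_\omega$ follows from $P_\omega=(\id\otimes\omega)(\wW^\GG)$, the pentagon/half-lifted pentagon equations relating $\wW^\GG$ and $\ww^\GG$, and idempotence $\omega\staru\omega=\omega$: applying $\id\otimes\id\otimes\omega$ to the appropriate leg of the pentagonal identity and using that $\Delta_{\hh\GG}$ is implemented by $\ww^{\hh\GG}=\flip({\ww^\GG})^*$ converts $\omega\staru\omega=\omega$ into exactly this relation; the symmetric equation $\Delta_{\hh\GG}(P_\omega)(P_\omega\otimes\I)=P_\omega\otimes P_\omega$ then follows by applying $R^{\hh\GG}\otimes R^{\hh\GG}$ composed with the flip, once we know $R^{\hh\GG}(P_\omega)=P_\omega$. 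The invariances $\tau^{\hh\GG}_t(P_\omega)=P_\omega$ and $R^{\hh\GG}(P_\omega)=P_\omega$ I would derive from \eqref{prescgr}: $\omega$ is preserved by the universal scaling group and unitary antipode of $\GG$, and under the pairing these transport across $\wW^\GG$ to the scaling group and unitary antipode of $\hh\GG$ acting on $P_\omega$ (this is the standard duality $\hh\tau_t = \tau_t|$, $\hh R$ matching, using the covariance of $\WW^\GG$). The modular invariances $\sigma^{\hh\psi}_t(P_\omega)=P_\omega$ and $\sigma^{\hh\varphi}_t(P_\omega)=P_\omega$ follow because, for a group-like projection fixed by the scaling group, one has $\psi_{\hh\GG}$- and $\varphi_{\hh\GG}$-invariance of the associated expectation, hence $\sigma$-invariance of its support projection $P_\omega$; alternatively, since $P_\omega$ is a central projection of $\tilde\sN$ and $\tilde\sN$ is $\tau^{\hh\GG}$-invariant, a Takesaki-type argument gives that $P_\omega$ is fixed by the modular groups. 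Finally, minimality of $P_\omega$ as a central projection of $\tilde\sN$: since $\sN=E_\omega(\Linf(\GG))$ is the range of the conditional expectation attached to the \emph{idempotent} state $\omega$, the coideal $\sN$ is "minimal" among those with expectation $\ge\omega$, which dualizes to $P_\omega$ generating a minimal central summand of $\tilde\sN$; concretely, any central projection $q\le P_\omega$ in $\tilde\sN$ gives a sub-coideal, and idempotence of $\omega$ forces $q=P_\omega$.

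The main obstacle I anticipate is the second displayed equation — pinning down $\tilde\sN$ precisely as the set satisfying the coideal equation $\Delta_{\hh\GG}(y)(\I\otimes P_\omega)=y\otimes P_\omega$ rather than merely "$y$ commutes with $P_\omega$." Getting the exact equation requires carefully combining the group-like relation for $P_\omega$, the definition of the codual, and the biduality $\tilde{\tilde\sN}=\sN$, and it is here that one must be careful about which leg of $\wW^\GG$ carries $\omega$ and how $\Delta_{\hh\GG}$ interacts with the range of $P_\omega$; the rest of the properties are then comparatively routine consequences of \eqref{prescgr} and the pentagon equations.
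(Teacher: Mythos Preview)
Your treatment of the first displayed identity for $\sN$ and of the invariances under $\tau^{\hh\GG}$ and $R^{\hh\GG}$ is fine and matches the paper. The real gaps are exactly where you sense them, plus two more you underestimate.

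\textbf{The description of $\tilde\sN$.} You try to reach $\Delta_{\hh\GG}(y)(\I\otimes P_\omega)=y\otimes P_\omega$ by combining the group-like relation for $P_\omega$ with abstract biduality, but this does not get you there: knowing only that $P_\omega$ is group-like and that $\tilde\sN$ is a coideal gives no handle on \emph{which} $y$ lie in $\tilde\sN$. The paper's route is direct and avoids this entirely. Since $\sN$ is the weak closure of the slices $\{(P_\omega\cdot\mu\otimes\id)(\ww^\GG):\mu\in\Linf(\hh\GG)_*\}$ by \eqref{dense}, an element $y\in\Linf(\hh\GG)$ commutes with $\sN$ iff $(\I\otimes y)\ww^\GG(P_\omega\otimes\I)=\ww^\GG(P_\omega\otimes y)$, i.e.\ ${\ww^\GG}^*(\I\otimes y)\ww^\GG(P_\omega\otimes\I)=P_\omega\otimes y$; since $\Delta_{\hh\GG}$ is implemented by $\ww^{\hh\GG}=\flip({\ww^\GG})^*$, this \emph{is} the equation $\Delta_{\hh\GG}(y)(\I\otimes P_\omega)=y\otimes P_\omega$. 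No pentagon gymnastics, no biduality needed. The group-like identity for $P_\omega$ then drops out for free because $P_\omega\in\tilde\sN$.

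\textbf{Minimality and centrality.} Your ``idempotence of $\omega$ forces $q=P_\omega$'' is not an argument. The paper's proof is a one-liner once you have the $\tilde\sN$ description: for any $y\in\tilde\sN$ and $\mu\in\Linf(\hh\GG)_*$, multiplying $\Delta_{\hh\GG}(y)(\I\otimes P_\omega)=y\otimes P_\omega$ on the right by $\I\otimes P_\omega$ and slicing gives $(\mu\otimes\id)(\Delta_{\hh\GG}(y))P_\omega=\mu(y)P_\omega=P_\omega(\mu\otimes\id)(\Delta_{\hh\GG}(y))$. By the Podle\'s condition these slices span $\tilde\sN$, so $\tilde\sN P_\omega=\CC P_\omega$ and $P_\omega$ is central in $\tilde\sN$.

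\textbf{Modular invariance.} Neither of your suggested mechanisms works as stated: there is no ``associated expectation'' on $\Linf(\hh\GG)$ whose $\hh\psi$-invariance you have established, and a generic Takesaki argument does not apply since $\tilde\sN$ is not a priori $\sigma^{\hh\psi}$-invariant. The paper uses the relation $\Delta_{\hh\GG}\circ\sigma^{\hh\psi}_t=(\sigma^{\hh\psi}_t\otimes\tau^{\hh\GG}_{-t})\circ\Delta_{\hh\GG}$ together with $\tau^{\hh\GG}_t(P_\omega)=P_\omega$ to show $\sigma^{\hh\psi}_t(P_\omega)\in\tilde\sN$ for all $t$. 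Then minimality gives $P_\omega\sigma^{\hh\psi}_t(P_\omega)\in\{0,P_\omega\}$; continuity in $t$ and the value at $t=0$ force $P_\omega\le\sigma^{\hh\psi}_t(P_\omega)$ for all $t$, hence equality. The $\sigma^{\hh\varphi}$-invariance then follows from $\sigma^{\hh\varphi}_t=R^{\hh\GG}\circ\sigma^{\hh\psi}_{-t}\circ R^{\hh\GG}$.
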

\begin{proof}
The equalities   $\tau^{\hh\GG}_t(P_\omega) = P_\omega$ and $R^{\hh\GG}(P_\omega) = P_{\omega}$   follow  easily from \eqref{prescgr}.

Let $x\in\Linf(\GG)$. 
Using \eqref{Pom} we  see that the condition 
\begin{equation}\label{cond1}P_\omega x = x P_\omega\end{equation} holds if and only if 
\[\eta_\GG(E_\omega(xz)) = \eta_\GG(x E_\omega(z))\] for all $z\in\mathcal{N}_\psi$. The latter  is equivalent to the identity $E_\omega(xz) = xE_\omega(z)$ holding for all $z\in\mathcal{N}_\psi$. Since $\mathcal{N}_\psi\subset \Linf(\GG)$ forms a dense subset of $\Linf(\GG)$, we see that \eqref{cond1} is equivalent with $E_\omega(x) = x$. 

Using  \eqref{dense}, we can see that $y\in\tilde\sN$ if and only if 
\[(\mu\otimes \id)((\I\otimes y)\ww(P_\omega\otimes \I)) = (\mu\otimes \id)(\ww(P_\omega\otimes y))\] for all $\mu\in\Linf(\hh\GG)_*$. Equivalently $y\in\tilde\sN$ if and only if 
\[\ww^*(\I\otimes y)\ww(P_\omega\otimes \I) = P_\omega\otimes y\] which is in turn equivalent with 
\[\Delta_{\hh\GG}(y)(\I\otimes P_\omega) = y\otimes P_\omega.\] 
Since  $P_\omega\in\tilde{\sN}$ we get 
 $\Delta_{\hh\GG}(P_\omega)(\I\otimes P_\omega) = P_\omega\otimes P_\omega$. 
 
Using Podle\'s condition  $\tilde{\sN} = \{(\mu\otimes\id)(\Delta_{\hh\GG}(y)):y\in \tilde\sN,\mu\in\Linf(\hh\GG)_*\}^{ \textrm{\tiny{$\sigma$-weak cls}}}$ we conclude   that $P_\omega$ is a minimal central projection in $\tilde{\sN}$. 
 Indeed, for all $y\in \tilde\sN$ and $ \mu\in\Linf(\hh\GG)_*$ we have 
 \[(\mu\otimes\id)(\Delta_{\hh\GG}(y))P_\omega = \mu(y)P_\omega =P_\omega (\mu\otimes\id)(\Delta_{\hh\GG}(y)).\] Thus $\tilde{\sN}P_\omega = \mathbb{C}P_\omega$ (i.e. $P_\omega$ is minimal in $\tilde{\sN}$) and $P_\omega\in Z(\tilde{\sN})$. 
Minimality and centrality of $P_\omega\in \tilde{\sN}$ yields a unique normal character $\varepsilon_\omega:\tilde\sN\to\mathbb{C}$ such that $yP_\omega =\varepsilon_\omega(y)P_\omega$ for all $y\in \tilde{\sN}$.

Using $\Delta_{\hh\GG}\circ\sigma^{\hh\psi}_t = (\sigma^{\hh\psi}_t\otimes\tau^{\hh\GG}_{-t})\circ\Delta_{\hh\GG}$ (see \cite[Proposition 6.8]{KV}) we get
\begin{align*}
    \Delta_{\hh\GG}(\sigma^{\hh\psi}_t(P_\omega))(\I\otimes P_\omega) &= (\sigma^{\hh\psi}_t\otimes \tau^{\hh\GG}_{-t})(\Delta_{\hh\GG}(P_\omega))(\I\otimes P_\omega)\\&= (\sigma^{\hh\psi}_t\otimes \tau^{\hh\GG}_{-t})(\Delta_{\hh\GG}(P_\omega)(\I\otimes P_\omega))
    \\&= \sigma^{\hh\psi}_t(P_\omega)\otimes  P_\omega
\end{align*} and $\sigma^{\hh\psi}_t(P_\omega)\in\tilde\sN$. In particular $P_\omega   \sigma^{\hh\psi}_t(P_\omega) = \varepsilon_\omega(\sigma^{\hh\psi}_t(P_\omega))P_\omega$, where $\varepsilon_\omega(\sigma^{\hh\psi}_t(P_\omega))\in\{0,1\}$ for all $t\in\mathbb{R}$. Since  the map $\mathbb{R}\ni t\mapsto\varepsilon_\omega(\sigma^{\hh\psi}_t(P_\omega))\in\mathbb{R}$  is continuous and    $\varepsilon_\omega(\sigma^{\hh\psi}_t(P_\omega))|_{t=0} = 1$, we conclude that $P_\omega   \sigma^{\hh\psi}_t(P_\omega) = P_\omega$, i.e. 
$\sigma^{\hh\psi}_t(P_\omega)\geq  P_\omega$ for all $t\in\mathbb{R}$.  Thus also $\sigma^{\hh\psi}_{-t}(P_\omega)\leq  P_\omega$ for all $t\in\mathbb{R}$  and $\sigma^{\hh\psi}_t(P_\omega)= P_\omega$. 

Since $P_\omega$ is preserved by $R^{\hh\GG}$,  the identity $\Delta_{\hh\GG}(P_\omega)(\I\otimes P_\omega) =  P_\omega\otimes P_\omega$ implies that $$\Delta_{\hh\GG}(P_\omega)(P_\omega\otimes \I) = P_\omega\otimes P_\omega.$$

Finally using $\sigma^{\hh\varphi}_t = R^{\hh\GG}\circ \sigma^{\hh\psi}_{-t}\circ R^{\hh\GG}$ we get $\sigma^{\hh\varphi}_t(P_\omega)= P_\omega$ for all $t\in\mathbb{R}$. 
\end{proof}
For the concept of $\hh\GG$-generation used in the next Lemma, see Definition \ref{Ggen}.
\begin{lemma}\label{genset}
Let $\omega\in\C_0^u(\GG)^*$ be an idempotent state, $\sN = E_\omega(\Linf(\GG))$ the corresponding coideal and $\tilde\sN\subset\Linf(\hh\GG)$ the codual of $\sN$. Then  $\tilde\sN$ is $\hh\GG$-generated by $P_\omega\in\tilde\sN$. 
\end{lemma}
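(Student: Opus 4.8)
The plan is to identify the von Neumann subalgebra of $\tilde\sN$ that is $\hh\GG$-generated by $P_\omega$ as a coideal of $\Linf(\hh\GG)$, compute its codual, and conclude via the double-codual identity. Since $\tilde\sN$ is a left coideal of $\Linf(\hh\GG)$, the restriction $\Delta_{\hh\GG}|_{\tilde\sN}\colon\tilde\sN\to\Linf(\hh\GG)\vtens\tilde\sN$ is an action of $\hh\GG$ on $\tilde\sN$, and by Theorem \ref{lemma0} the element $P_\omega$ is a self-adjoint element of $\tilde\sN$. Hence Theorem \ref{thm_gen_el} applies to this action and the element $P_\omega$, producing the smallest unital von Neumann subalgebra
\[
\sM:=\{(\mu\otimes\id)(\Delta_{\hh\GG}(P_\omega)):\mu\in\Linf(\hh\GG)_*\}''
\]
of $\tilde\sN$ that is preserved by $\hh\GG$ and contains $P_\omega$. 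In particular $\sM\subset\tilde\sN$, and since $\sM$ is preserved by $\hh\GG$ (i.e. $\Delta_{\hh\GG}(\sM)\subset\Linf(\hh\GG)\vtens\sM$) it is itself a left coideal of $\Linf(\hh\GG)$. Proving the Lemma therefore amounts to showing that $\sM=\tilde\sN$.

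Next I would compute the codual $\tilde\sM=\sM'\cap\Linf(\GG)\subset\Linf(\GG)$, all commutants being taken in $\B(\Ltwo(\GG))$, on which both $\Linf(\GG)$ and $\Linf(\hh\GG)$ act. On the one hand $P_\omega\in\sM$, so $\sM'\subset\{P_\omega\}'$; combining this with the description $\sN=\{x\in\Linf(\GG):P_\omega x=xP_\omega\}$ from Theorem \ref{lemma0} gives
\[
\tilde\sM=\sM'\cap\Linf(\GG)\subset\{P_\omega\}'\cap\Linf(\GG)=\sN .
\]
On the other hand the codual reverses inclusions, so from $\sM\subset\tilde\sN$ one gets $\tilde\sM\supset\widetilde{\tilde\sN}=\sN$, using $\widetilde{\tilde\sN}=\sN$. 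Hence $\tilde\sM=\sN$.

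Finally, applying the codual once more and invoking $\widetilde{\widetilde{(\,\cdot\,)}}=(\,\cdot\,)$ for coideals (equivalently: the codual is injective on coideals), one obtains $\sM=\widetilde{\tilde\sM}=\widetilde{\sN}=\tilde\sN$, which is precisely the statement that $\tilde\sN$ is $\hh\GG$-generated by $P_\omega$. I do not expect a serious obstacle here: the argument is a bookkeeping combination of Theorem \ref{thm_gen_el}, Theorem \ref{lemma0} and the codual formalism of \cite{embed}. The only point that needs a little care is the identification $\{P_\omega\}'\cap\Linf(\GG)=\sN$, which has to be quoted verbatim from Theorem \ref{lemma0}, together with keeping track of the fact that $P_\omega\in\Linf(\hh\GG)$ and $\sN\subset\Linf(\GG)$ act on the same Hilbert space $\Ltwo(\GG)$, so that the relevant commutants make sense and the codual computations are legitimate.
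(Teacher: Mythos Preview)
Your proof is correct and follows essentially the same strategy as the paper: identify the coideal $\sM\subset\Linf(\hh\GG)$ that is $\hh\GG$-generated by $P_\omega$ via Theorem \ref{thm_gen_el}, show that its codual equals $\sN$, and conclude by the double-codual identity. The only difference is in how the equality $\tilde\sM=\sN$ is obtained: the paper proves directly, using the implementation $\hh V=(J\otimes J)\ww^*(J\otimes J)$, that commuting with $P_\omega$ is \emph{equivalent} to commuting with every slice $(\mu\otimes\id)\Delta_{\hh\GG}(P_\omega)$, whence $\sN=\sS'\cap\Linf(\GG)$ in one stroke; you instead split the two inclusions, getting $\tilde\sM\subset\sN$ from $P_\omega\in\sM$ together with Theorem \ref{lemma0}, and $\tilde\sM\supset\sN$ from $\sM\subset\tilde\sN$ and the inclusion-reversing property of the codual. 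Your route is slightly more abstract and avoids the explicit unitary computation, but the content is the same.
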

\begin{proof}
Let us recall that $x\in\sN$ if and only if $x\in\Linf(\GG)$ and $xP_\omega = P_\omega x$. Let $\hh V = (J\otimes J)\ww^*(J\otimes J)\in\Linf(\hh\GG)\vtens\Linf(\GG)'$ where $J:\Ltwo(\GG)\to\Ltwo(\GG)$ is the Tomita-Takesaki antiunitary conjugation assigned to $\psi$. Then for all $y\in\Linf(\hh\GG)$ we have 
\[\Delta_{\hh\GG}(y) = \hh V^*(\I\otimes y)\hh V.\] In particular if $x\in\Linf(\GG)$ and $P_\omega x = x P_\omega$ then \begin{equation}\label{eqPom}\Delta_{\hh\GG}(P_\omega)(\I\otimes x) =\hh V^*(\I\otimes P_\omega)\hh V(\I\otimes x)= (\I\otimes x)\Delta_{\hh\GG}(P_\omega).\end{equation} Conversely, if \eqref{eqPom} holds then 
\[P_\omega\otimes P_\omega x = \Delta_{\hh\GG}(P_\omega)(\I\otimes x)(P_\omega\otimes\I) = (\I\otimes x)\Delta_{\hh\GG}(P_\omega)(P_\omega\otimes\I) = P_\omega\otimes xP_\omega\] and we get $P_\omega x = x P_\omega$. In particular $\sN = \sS'\cap\Linf(\GG)$, where 
\[\sS =\{(\mu\otimes\id)( \Delta_{\hh\GG}(P_\omega)):\mu\in\Linf(\hh\GG)_*\}.\]

Let us note that $\sS''$  is the smallest  coideal of $\Linf(\hh\GG)$ containing $P_\omega$ (see Theorem \ref{thm_gen_el}). Since $\sN =\sS' \cap\Linf(\GG) =  (\sS'')'\cap\Linf(\GG)$ we get   $\sS'' = \tilde{\sN}$. 
\end{proof}

\begin{lemma}\label{corstrong}
Adopting the above notation we have  $\tau^{\hh\GG}_t(x) = \sigma^{\hh\varphi}_{t}(x)$ for all $x\in\tilde\sN$ and $t\in\mathbb{R}$.
\end{lemma}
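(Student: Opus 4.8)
The plan is to show that on the coideal $\tilde\sN$ the scaling group and the modular automorphism group of $\hh\varphi$ coincide, by exploiting the fact that $\tilde\sN$ is generated, in the sense of $\hh\GG$-generation, by the single element $P_\omega$ (Lemma \ref{genset}), together with the fact — established in Theorem \ref{lemma0} — that both $\tau^{\hh\GG}$ and $\sigma^{\hh\varphi}$ fix $P_\omega$. So the two one-parameter groups agree on a generating element; the issue is to propagate this to all of $\tilde\sN$.

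First I would recall the commutation relations between the deformation automorphism groups and the comultiplication on $\hh\GG$: from \cite{KV} one has $\Delta_{\hh\GG}\circ\tau^{\hh\GG}_t = (\tau^{\hh\GG}_t\otimes\tau^{\hh\GG}_t)\circ\Delta_{\hh\GG}$ and $\Delta_{\hh\GG}\circ\sigma^{\hh\varphi}_t = (\tau^{\hh\GG}_t\otimes\sigma^{\hh\varphi}_t)\circ\Delta_{\hh\GG}$ (the latter being the $\hh\varphi$-analogue of the relation used in the proof of Theorem \ref{lemma0}). Consider a typical generating element of $\tilde\sN$ of the form $x_\mu=(\mu\otimes\id)(\Delta_{\hh\GG}(P_\omega))$ with $\mu\in\Linf(\hh\GG)_*$. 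Applying $\sigma^{\hh\varphi}_t$ and using the second commutation relation together with $\sigma^{\hh\varphi}_t(P_\omega)=P_\omega$ gives
\[
\sigma^{\hh\varphi}_t(x_\mu) = (\mu\otimes\id)\big((\tau^{\hh\GG}_t\otimes\sigma^{\hh\varphi}_t)(\Delta_{\hh\GG}(P_\omega))\big) = (\mu\circ\tau^{\hh\GG}_t\otimes\id)\big((\id\otimes\sigma^{\hh\varphi}_t)(\Delta_{\hh\GG}(P_\omega))\big),
\]
and the analogous computation with $\tau^{\hh\GG}_t$ in place of $\sigma^{\hh\varphi}_t$, using the first commutation relation and $\tau^{\hh\GG}_t(P_\omega)=P_\omega$, gives $\tau^{\hh\GG}_t(x_\mu) = (\mu\circ\tau^{\hh\GG}_t\otimes\id)\big((\id\otimes\tau^{\hh\GG}_t)(\Delta_{\hh\GG}(P_\omega))\big)$. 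Thus the difference between $\sigma^{\hh\varphi}_t(x_\mu)$ and $\tau^{\hh\GG}_t(x_\mu)$ is governed entirely by the difference between $(\id\otimes\sigma^{\hh\varphi}_t)$ and $(\id\otimes\tau^{\hh\GG}_t)$ applied to $\Delta_{\hh\GG}(P_\omega)$ in the second leg — which is the same statement one leg down. To close this I would argue by a bootstrap: the second leg of $\Delta_{\hh\GG}(P_\omega)$ lies in $\tilde\sN$ (it is built from the $x_\mu$'s by the Podle\'s/density argument of Theorem \ref{lemma0} and Lemma \ref{genset}), so the claim that $\sigma^{\hh\varphi}_t=\tau^{\hh\GG}_t$ there is exactly what is to be proven; phrased more carefully, one shows that the set $\sJ=\{y\in\tilde\sN : \sigma^{\hh\varphi}_t(y)=\tau^{\hh\GG}_t(y)\ \forall t\}$ is a $\sigma$-weakly closed subspace containing $P_\omega$ and stable under $y\mapsto(\mu\otimes\id)(\Delta_{\hh\GG}(y))$ for every $\mu\in\Linf(\hh\GG)_*$, whence by $\hh\GG$-generation $\sJ=\tilde\sN$.

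The step that needs care — and I expect it to be the main obstacle — is making the "stability under $y\mapsto(\mu\otimes\id)(\Delta_{\hh\GG}(y))$" argument rigorous, because naively it looks circular: the commutation relation for $\sigma^{\hh\varphi}_t$ moves $\tau^{\hh\GG}_t$ onto the first leg but leaves $\sigma^{\hh\varphi}_t$ on the second leg, so one cannot immediately conclude that $\sigma^{\hh\varphi}_t((\mu\otimes\id)(\Delta_{\hh\GG}(y)))$ equals $\tau^{\hh\GG}_t((\mu\otimes\id)(\Delta_{\hh\GG}(y)))$ just from $y\in\sJ$. The resolution is to observe that $\Delta_{\hh\GG}(y)\in\Linf(\hh\GG)\vtens\tilde\sN$ (since $\tilde\sN$ is a coideal), so the second-leg entries already lie in $\tilde\sN$; then for $y\in\sJ$ one has $(\id\otimes\sigma^{\hh\varphi}_t)(\Delta_{\hh\GG}(y)) = (\id\otimes\tau^{\hh\GG}_t)(\Delta_{\hh\GG}(y))$ by applying the hypothesis slice-wise in the second leg — this is where $\sJ$ being a $\sigma$-weakly closed subspace, hence determined by slicing, is used — and the commutation relation then gives $\sigma^{\hh\varphi}_t((\mu\otimes\id)(\Delta_{\hh\GG}(y))) = (\mu\circ\tau^{\hh\GG}_t\otimes\id)((\id\otimes\tau^{\hh\GG}_t)(\Delta_{\hh\GG}(y))) = \tau^{\hh\GG}_t((\mu\otimes\id)(\Delta_{\hh\GG}(y)))$, as desired. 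Once this stability is in hand, $\sigma$-weak closedness of $\sJ$ and Lemma \ref{genset} (which asserts precisely that $\tilde\sN$ is the smallest such object containing $P_\omega$) finish the proof.
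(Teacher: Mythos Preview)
Your overall plan --- use the commutation relations for $\tau^{\hh\GG}$ and $\sigma^{\hh\varphi}$ with $\Delta_{\hh\GG}$, the invariance of $P_\omega$ under both groups, and the $\hh\GG$-generation of $\tilde\sN$ by $P_\omega$ --- is exactly the paper's approach. But your execution manufactures a circularity that is not there, and your proposed resolution of it is itself circular.

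First, your displayed computation $\sigma^{\hh\varphi}_t(x_\mu) = (\mu\otimes\id)\bigl((\tau^{\hh\GG}_t\otimes\sigma^{\hh\varphi}_t)(\Delta_{\hh\GG}(P_\omega))\bigr)$ is wrong as written: the right-hand side equals $(\mu\otimes\id)\Delta_{\hh\GG}(\sigma^{\hh\varphi}_t(P_\omega)) = x_\mu$, not $\sigma^{\hh\varphi}_t(x_\mu)$. The correct starting point is simply $\sigma^{\hh\varphi}_t(x_\mu)=(\mu\otimes\id)\bigl((\id\otimes\sigma^{\hh\varphi}_t)\Delta_{\hh\GG}(P_\omega)\bigr)$. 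More importantly, you stop one step too early. From $\Delta_{\hh\GG}\circ\sigma^{\hh\varphi}_t=(\tau^{\hh\GG}_t\otimes\sigma^{\hh\varphi}_t)\circ\Delta_{\hh\GG}$ and $\sigma^{\hh\varphi}_t(P_\omega)=P_\omega$ one gets $(\id\otimes\sigma^{\hh\varphi}_t)\Delta_{\hh\GG}(P_\omega)=(\tau^{\hh\GG}_{-t}\otimes\id)\Delta_{\hh\GG}(P_\omega)$, and the identical manipulation with $\tau^{\hh\GG}$ gives $(\id\otimes\tau^{\hh\GG}_t)\Delta_{\hh\GG}(P_\omega)=(\tau^{\hh\GG}_{-t}\otimes\id)\Delta_{\hh\GG}(P_\omega)$. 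So both second-leg expressions are \emph{equal}, and hence $\sigma^{\hh\varphi}_t(x_\mu)=x_{\mu\circ\tau^{\hh\GG}_{-t}}=\tau^{\hh\GG}_t(x_\mu)$ outright. No bootstrap is needed; this is the paper's entire argument. Since the two one-parameter groups of normal $*$-automorphisms agree on the self-adjoint set $\sS=\{x_\mu\}$ and $\tilde\sN=\sS''$ by Lemma~\ref{genset}, they agree on $\tilde\sN$.

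Second, your ``resolution'' of the perceived circularity is genuinely circular. You claim that for $y\in\sJ$ one has $(\id\otimes\sigma^{\hh\varphi}_t)\Delta_{\hh\GG}(y)=(\id\otimes\tau^{\hh\GG}_t)\Delta_{\hh\GG}(y)$ ``by applying the hypothesis slice-wise in the second leg''. But the second-leg slices $(\nu\otimes\id)\Delta_{\hh\GG}(y)$ lie in $\tilde\sN$, not a priori in $\sJ$; asserting that $\sigma^{\hh\varphi}_t$ and $\tau^{\hh\GG}_t$ agree on them is precisely the stability of $\sJ$ under $y\mapsto(\nu\otimes\id)\Delta_{\hh\GG}(y)$ that you are trying to prove. (The identity you want \emph{is} true, but for a different reason: use the commutation relation to write $(\id\otimes\sigma^{\hh\varphi}_t)\Delta_{\hh\GG}(y)=(\tau^{\hh\GG}_{-t}\otimes\id)\Delta_{\hh\GG}(\sigma^{\hh\varphi}_t(y))$, then invoke $y\in\sJ$ to replace $\sigma^{\hh\varphi}_t(y)$ by $\tau^{\hh\GG}_t(y)$, and run the $\tau^{\hh\GG}$ commutation relation backwards. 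That argument is sound --- but once you see it, you realise it is just the $y=P_\omega$ computation above, which already settles everything.)
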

\begin{proof}
Using the formula $\Delta^{\hh\GG}\circ\sigma^{\hh\varphi}_t =(\tau_t^{\hh\GG}\otimes\sigma^{\hh\varphi}_t) \circ \Delta^{\hh\GG}$ (see \cite[Proposition 5.38]{KV}) and 
$\Delta^{\hh\GG}\circ\tau_t^{\hh\GG} =(\tau_t^{\hh\GG}\otimes\tau_t^{\hh\GG}) \circ\Delta^{\hh\GG}$ (see \cite[Result 5.12]{KV}), we conclude that for all $\mu\in\Linf(\hh\GG)_*$ 
\[\tau^{\hh\GG}_t((\mu\otimes\id)(\Delta_{\hh\GG}(P_\omega))) = \sigma^{\hh\varphi}_{t}((\mu\otimes\id)(\Delta_{\hh\GG}(P_\omega)))\] 
(note that for the latter we also use $\tau^{\hh\GG}$-invariance and $\sigma^{\hh\varphi}$-invariance of $P_\omega$). Since $\tilde\sN$ is $\hh\GG$-generated by $P_\omega$, we are done. 
\end{proof}
Next result is a strengthening of Lemma \ref{genset}.

\begin{theorem}\label{thmstrong}
Adopting the assumptions and notation of Lemma \ref{genset} we have 
\begin{equation}\label{strong}\tilde{\sN} = \overline{\{(\mu\otimes\id)(\Delta_{\hh\GG}(P_\omega)):\mu\in\Linf(\hh\GG)_*\}}^{\text{weak}}.\end{equation}
\end{theorem}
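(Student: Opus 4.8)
The plan is to show that the weak-closed linear span on the right-hand side of \eqref{strong}, call it $\sK$, is already a von Neumann algebra; once we know that, Lemma \ref{genset} (which identifies $\tilde\sN$ as the von Neumann algebra generated by $\sS$, hence the double commutant $\sS''$) together with the obvious inclusion $\sS\subset\sK\subset\tilde\sN$ forces $\sK=\sS''=\tilde\sN$. So the whole game is to upgrade ``the weak-closed span of $\sS$'' to ``the von Neumann algebra generated by $\sS$'', i.e.\ to prove that $\sK$ is closed under multiplication (it is clearly self-adjoint, since $\sS$ is, and it contains $P_\omega=(\eps_{\hh\GG}\text{-type functional}\otimes\id)\Delta_{\hh\GG}(P_\omega)$, so it is a unital $*$-closed subspace already).

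First I would record the two structural facts about the generating elements. Write $a_\mu=(\mu\otimes\id)(\Delta_{\hh\GG}(P_\omega))$ for $\mu\in\Linf(\hh\GG)_*$. Coassociativity gives the convolution identity $(\nu\otimes\id)\big(\Delta_{\hh\GG}(a_\mu)\big)=a_{\mu\starr\nu}$, i.e.\ $\Delta_{\hh\GG}$ maps the span of the $a_\mu$ into $\Linf(\hh\GG)\vtens\sK$; and the ``group-like'' identities from Theorem \ref{lemma0}, namely $\Delta_{\hh\GG}(P_\omega)(\I\otimes P_\omega)=P_\omega\otimes P_\omega=\Delta_{\hh\GG}(P_\omega)(P_\omega\otimes\I)$, let one compute products. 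Concretely, for $\mu,\nu\in\Linf(\hh\GG)_*$,
\[
a_\mu\,a_\nu=(\mu\otimes\nu\otimes\id)\big((\Delta_{\hh\GG}(P_\omega)\otimes\I)(\I\otimes\Delta_{\hh\GG}(P_\omega))\big),
\]
and using $\Delta_{\hh\GG}(P_\omega)(P_\omega\otimes\I)=P_\omega\otimes P_\omega$ together with $P_\omega$ being a minimal central projection of $\tilde\sN$ (so $P_\omega\,y\,P_\omega=\eps_\omega(y)P_\omega$ for $y\in\tilde\sN$, in particular for the slices of $\Delta_{\hh\GG}(P_\omega)$) one reduces the right-hand side to a single slice $c\cdot a_{\mu'}$ for a suitable scalar $c$ and functional $\mu'$. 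Thus $a_\mu a_\nu\in\sK$, so $\sK$ is an algebra, and being weakly closed and self-adjoint it is a von Neumann algebra.

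The main obstacle is this last reduction: multiplying two elements of $\sS$ naïvely produces an element of the span of $\{(\mu\otimes\id)\Delta_{\hh\GG}(y):y\in\tilde\sN,\mu\in\Linf(\hh\GG)_*\}$, which by Podle\'s is all of $\tilde\sN$ — too big. One genuinely needs to exploit that $P_\omega$ is \emph{minimal} central in $\tilde\sN$, via the character $\eps_\omega$, to collapse the ``inner'' $P_\omega$-sandwich and land back in the span of $\sS$ rather than in $\tilde\sN$. I would organize this by first proving, as a clean sublemma, that for every $y\in\tilde\sN$ one has $\Delta_{\hh\GG}(y)(\I\otimes P_\omega)=y\otimes P_\omega$ and $\Delta_{\hh\GG}(y)(P_\omega\otimes\I)=P_\omega\otimes y$ (the first is exactly the description of $\tilde\sN$ in Theorem \ref{lemma0}; the second follows by applying $R^{\hh\GG}$ and using $R^{\hh\GG}(P_\omega)=P_\omega$ together with $R^{\hh\GG}(\tilde\sN)=\tilde\sN$, which holds since $\sN$ and hence $\tilde\sN$ is preserved by the scaling/unitary-antipode data). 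Armed with those two slicing identities, the product computation goes through: $(\I\otimes\Delta_{\hh\GG}(P_\omega))$ has its first leg in $\tilde\sN$ against which the middle leg of $(\Delta_{\hh\GG}(P_\omega)\otimes\I)$ — also in $\tilde\sN$ — can be slid using $\Delta_{\hh\GG}(y)(P_\omega\otimes\I)=P_\omega\otimes y$, producing $P_\omega$ in the middle and then a scalar via minimality. One small point to be careful about: $P_\omega$ itself need not lie in $\Linf(\hh\GG)_*\cdot\Linf(\hh\GG)$ as a normal functional slice, but $P_\omega=a_\mu$ for the normal functional $\mu$ supported at $P_\omega$ (equivalently $\mu(\cdot)=\eps_\omega(P_\omega\cdot P_\omega)$-type), so no universal-level objects are needed and the argument stays entirely within the von Neumann framework, consistent with the paper's stated aim.
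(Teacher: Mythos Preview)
Your overall strategy --- show that $\sK$ is already a von Neumann algebra and then invoke Lemma~\ref{genset} --- is fine, and is close in spirit to what the paper does. The gap is in your ``clean sublemma''. You claim
\[
\Delta_{\hh\GG}(y)(P_\omega\otimes\I)=P_\omega\otimes y\qquad\text{for all }y\in\tilde\sN,
\]
and justify it by applying $R^{\hh\GG}$ to the known identity $\Delta_{\hh\GG}(y)(\I\otimes P_\omega)=y\otimes P_\omega$, using $R^{\hh\GG}(\tilde\sN)=\tilde\sN$. But this last invariance is \emph{not} available here: $\tilde\sN$ is a left coideal, and $R^{\hh\GG}$ carries left coideals to right coideals, so $R^{\hh\GG}(\tilde\sN)=\tilde\sN$ is an extra, nontrivial hypothesis. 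In the paper this $R$-invariance is only obtained later (in the proof of Theorem~\ref{thm_normal_coid}), under the additional assumption that $\sN$ is normal, and \emph{as a consequence} of Theorem~\ref{thmstrong}. So as written your argument is circular. (There is also a leg-numbering slip: $a_\mu a_\nu=(\mu\otimes\nu\otimes\id)\bigl(\Delta_{\hh\GG}(P_\omega)_{13}\,\Delta_{\hh\GG}(P_\omega)_{23}\bigr)$, not $(\Delta_{\hh\GG}(P_\omega)\otimes\I)(\I\otimes\Delta_{\hh\GG}(P_\omega))$; the latter evaluates to $a_{\,a_\mu\cdot\nu}$, which is not $a_\mu a_\nu$.)

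The paper replaces your sublemma by the antipode-twisted identity
\[
\Delta_{\hh\GG}(P_\omega)(\I\otimes x)=\Delta_{\hh\GG}(P_\omega)\bigl(S_{\hh\GG}^{-1}(x)\otimes\I\bigr),\qquad x\in\tilde\sN\cap D(S_{\hh\GG}^{-1}),
\]
proved via the strong left invariance formula~\eqref{stronleftinv} together with the $\sigma^{\hh\varphi}$-invariance of $P_\omega$ and Lemma~\ref{corstrong} (which gives $\tau^{\hh\GG}_t=\sigma^{\hh\varphi}_t$ on $\tilde\sN$). This identity immediately shows that $\sK$ is a two-sided \emph{ideal} in $\tilde\sN$, and since $\sK$ is $\hh\GG$-invariant, ergodicity of $\Delta_{\hh\GG}|_{\tilde\sN}$ forces $\sK=\tilde\sN$. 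The point is that the antipode $S_{\hh\GG}^{-1}$, not the unitary antipode $R^{\hh\GG}$, is the correct ``slide'' here; the modular input from Lemma~\ref{corstrong} is exactly what makes the computation with $S_{\hh\GG}^{-1}$ close up, without ever needing $R$-invariance of $\tilde\sN$.
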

\begin{proof}From  $\tau^{\hh\GG}$-invariance of $\tilde\sN$ it follows that   $\tilde\sN\cap D(S_{\hh\GG}^{-1})$ is a dense subset of $\tilde\sN$.
Suppose that $x\in\tilde\sN\cap D(S_{\hh\GG}^{-1})$. We shall prove that 
\begin{equation}\label{strong1}\Delta_{\hh\GG}(P_\omega)(\I\otimes x) = \Delta_{\hh\GG}(P_\omega)(S_{\hh\GG}^{-1}(x)\otimes\I).\end{equation}
From this, it follows that $\overline{\{(\mu\otimes\id)(\Delta_{\hh\GG}(P_\omega)):\mu\in\Linf(\hh\GG)_*\}}^{\text{weak}}$ is an ideal in $\tilde{\sN}$ (in particular a von Neumann subalgebra of $\tilde{\sN}$). It is also easy to check that the right hand side of Eq. \eqref{strong} is  $\hh\GG$-invariant. By ergodicity of the action of $\hh\GG$ on $\tilde\sN$, we conclude that Eq. \eqref{strong} holds (here we use the same argument as in the final part of the proof of \cite[Theorem 3.3]{KKS}). It remains to prove Eq. \eqref{strong1}. 
To this end, we continue assuming that $x$ is $\tau^{\hh\GG}$ - analytic. Note that by Corollary \ref{corstrong}, it is also $\sigma^{\hh\varphi}$ - analytic. Let $a,b\in\mathcal{N}_{\hh\varphi}$. We compute 
\begin{align*}
(\id\otimes\hh\varphi)((\I\otimes a^*)\Delta_{\hh\GG}(bP_\omega)(S_{\hh\GG}(x)\otimes\I))&=S_{\hh\GG}((\id\otimes\hh\varphi)((x\otimes \I)\Delta_{\hh\GG}(a^*)(\I\otimes bP_\omega))\\
&=S_{\hh\GG}((\id\otimes\hh\varphi)((x\otimes P_\omega)\Delta_{\hh\GG}(a^*)(\I\otimes b))\\
&=S_{\hh\GG}((\id\otimes\hh\varphi)((\I\otimes P_\omega)\Delta_{\hh\GG}(xa^*)(\I\otimes b))\\
&=S_{\hh\GG}((\id\otimes\hh\varphi)(\Delta_{\hh\GG}(xa^*)(\I\otimes bP_\omega))\\
&=(\id\otimes\hh\varphi)((\I\otimes xa^*)\Delta_{\hh\GG}(bP_\omega))\\
&=(\id\otimes\hh\varphi)((\I\otimes a^*)\Delta_{\hh\GG}(bP_\omega)(\I\otimes \sigma^{\hh\varphi}_{-i}(x)))
\end{align*} where in the first and the fifth equality, we use Eq. \eqref{stronleftinv} and in the second and the fourth equality, we use $\sigma^{\hh\varphi}$-invariance of $P_\omega$. 
Thus we get 
\[\Delta_{\hh\GG}(P_\omega)(S_{\hh\GG}(x)\otimes\I)=\Delta_{\hh\GG}(P_\omega)(\I\otimes \sigma^{\hh\varphi}_{-i}(x)).\]
Replacing $x$ with $\sigma^{\hh\varphi}_{i}(x)$ and using Corollary \ref{corstrong}, we get \eqref{strong1} for $\tau^{\hh\GG}$ - analytic $x$. Since the space of $\tau^{\hh\GG}$-analytic elements forms a core of $S^{-1}_{\hh\GG}$ we get \eqref{strong1}. 
\end{proof}
Theorem \ref{thmstrong} is a generalization of \cite[Theorem 3.3]{KKS}. 
Note that in the proof of \cite[Theorem 3.3]{KKS}, which treats the case of central $P_\omega$,  a small mistake was done where instead of Eq. \eqref{strong1} the following formula was derived: 
\[\Delta_{\hh\GG}(P_\omega)(\I\otimes x) = \Delta_{\hh\GG}(P_\omega)(R_{\hh\GG}(x)\otimes\I).\] 

The next theorem was first  proved in \cite[Theorem 5.15]{coid_sub_st}. The previous proof  strongly uses the universal $\C^*$-version of $\GG$. In what follows we give a  simpler proof   which is based on  the von Neumann version of $\GG$.  
\begin{theorem}\label{thm_normal_coid}
Let $\sN\subset \Linf(\GG)$ be an integrable normal coideal preserved by $\tau^\GG$. Then there exists a unique compact quantum subgroup $\HH\subset \GG$ such that $\sN = \Linf(\GG/\HH)$.
\end{theorem}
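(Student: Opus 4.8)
The plan is to realize the coideal $\sN$ as the image of an idempotent state on $\hh{\GG}$, and then to invoke the correspondence between idempotent states and compact quantum subgroups. More precisely, I would first pass to the codual $\tilde\sN\subset\Linf(\hh\GG)$, which is again a coideal, preserved by $\tau^{\hh\GG}$ because $\sN$ is preserved by $\tau^\GG$. The normality hypothesis on $\sN$ should translate, via the standard codual dictionary, into $\tilde\sN$ being an invariant subalgebra of $\Linf(\hh\GG)$, i.e.\ $\Delta_{\hh\GG}(\tilde\sN)\subset\tilde\sN\vtens\tilde\sN$; together with $\tau^{\hh\GG}$-invariance and $R^{\hh\GG}$-invariance (the latter should follow from the integrability hypothesis, which dualizes to the existence of a suitable invariant state, or be extracted along the way) this makes $\tilde\sN$ a Baaj--Vaes subalgebra of $\Linf(\hh\GG)$. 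Hence $\tilde\sN = \Linf(\hh\HH)$ for a (closed, in fact here we will get compact) quantum subgroup $\HH$ of $\GG$, by the Baaj--Vaes characterization of closed quantum subgroups.

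Next I would show $\HH$ is compact and recover the group-like projection. The integrability of $\sN$ means $\psi_\GG|_\sN$ is semifinite; dually this should force the restriction of $\hh\varphi$ (or $\hh\psi$) to $\tilde\sN=\Linf(\hh\HH)$ to be, up to scaling, a \emph{finite} weight, i.e.\ the Haar weight of $\hh\HH$ is a state, which is exactly compactness of $\HH$. Once $\HH$ is compact, the Haar state $\psi_\HH$ gives an idempotent state $\omega := \psi_\HH\circ\rho^u$ on $\C_0^u(\hh\GG)^\ast$ — or more directly, the central support projection of the conditional expectation $E=(\id\otimes\psi_\HH)\circ\alpha$ — and the associated projection $P_\omega\in\Linf(\hh{\hh\GG}) = \Linf(\GG)$ is a group-like projection. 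By Lemma \ref{genset} and Theorem \ref{lemma0} applied to $\hh\GG$ (so with roles swapped), $\sN$ is exactly the coideal $\{x\in\Linf(\GG): P_\omega x = xP_\omega\}$, and unwinding the definition of $\Linf(\GG/\HH)$ via $V=(\gamma\otimes\id)(\ww^\HH)$ one checks this equals $\Linf(\GG/\HH)$. The cleanest route for the last identification is to verify directly that the conditional expectation $E=(\id\otimes\psi_\HH)\circ\alpha$ of \eqref{eq:def_ce_H} has image $\sN$: both are coideals with the same codual $\Linf(\hh\HH)$, and the codual determines the coideal since $\tilde{\tilde\sN}=\sN$.

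For uniqueness: if $\HH_1,\HH_2$ are compact quantum subgroups with $\Linf(\GG/\HH_1)=\Linf(\GG/\HH_2)=\sN$, then their coduals coincide, $\Linf(\hh\HH_1)=\tilde\sN=\Linf(\hh\HH_2)$ inside $\Linf(\hh\GG)$, and since a closed quantum subgroup is determined by the embedding $\gamma:\Linf(\hh\HH)\to\Linf(\hh\GG)$ (together with its comultiplication, which is the restriction of $\Delta_{\hh\GG}$), we get $\HH_1\cong\HH_2$ as closed quantum subgroups of $\GG$.

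The main obstacle I anticipate is the dualization of the hypotheses: showing that \emph{normal} $+$ \emph{$\tau^\GG$-preserved} for $\sN$ forces $\tilde\sN$ to be a full Baaj--Vaes subalgebra (in particular $R^{\hh\GG}$-invariant), and that \emph{integrable} forces the relevant dual Haar weight on $\tilde\sN$ to be finite, i.e.\ $\HH$ compact. The normality/invariance passage is where one must be careful: the definition of \emph{normal} is phrased via $\ww^\GG(\I\otimes\sN){\ww^\GG}^\ast\subset\Linf(\hh\GG)\vtens\sN$, and one needs to convert this, using the codual relation $\tilde\sN=\sN'\cap\Linf(\hh\GG)$ and properties of $\ww^\GG\in\Linf(\hh\GG)\vtens\Linf(\GG)$, into $\Delta_{\hh\GG}(\tilde\sN)\subset\tilde\sN\vtens\tilde\sN$ — this is a module-theoretic commutant computation analogous to the one in the proof of Lemma \ref{genset}. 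The scaling-group invariance of $\tilde\sN$ then yields $\sigma^{\hh\varphi}$-invariance on $\tilde\sN$ by Lemma \ref{corstrong}-type arguments, and combined with semifiniteness/finiteness of the restricted weight one gets the remaining modular and antipodal invariances needed to apply the Baaj--Vaes theorem. Everything after that point is bookkeeping with the established correspondences.
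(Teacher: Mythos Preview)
Your overall architecture matches the paper's: pass to the codual $\tilde\sN$, use normality of $\sN$ to get $\Delta_{\hh\GG}(\tilde\sN)\subset\tilde\sN\vtens\tilde\sN$ (this equivalence is \cite[Proposition 4.3]{gpext}), use $\tau^\GG$-invariance of $\sN$ to get $\tau^{\hh\GG}$-invariance of $\tilde\sN$, argue that $\tilde\sN$ is Baaj--Vaes so that $\tilde\sN=\Linf(\hh\HH)$ and $\sN=\Linf(\GG/\HH)$; compactness of $\HH$ then follows from integrability of $\sN$ (the paper cites \cite[Theorem A.3]{int} rather than any dualization of weights), and uniqueness is as you describe.

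The genuine gap is the $R^{\hh\GG}$-invariance of $\tilde\sN$, which you correctly flag as the main obstacle but do not resolve. Your two proposed mechanisms --- that it ``should follow from the integrability hypothesis, which dualizes to the existence of a suitable invariant state'' or that ``$\sigma^{\hh\varphi}$-invariance\ldots combined with semifiniteness/finiteness of the restricted weight'' delivers antipodal invariance --- are not correct as stated, and Lemma \ref{corstrong} cannot be invoked until one already knows $\tilde\sN$ arises from an idempotent state. The paper's argument supplies precisely this missing input, and it is the purpose of the preceding section: first apply \cite[Theorem 4.2]{coid_sub_st} (using integrability and $\tau^\GG$-invariance of $\sN$) to obtain an idempotent state $\omega\in\C_0^u(\GG)^*$ with $\sN=E_\omega(\Linf(\GG))$; then $P_\omega\in\tilde\sN$ satisfies $R^{\hh\GG}(P_\omega)=P_\omega$ by Theorem \ref{lemma0}, and by Theorem \ref{thmstrong} the slices $(\mu\otimes\id)\Delta_{\hh\GG}(P_\omega)$ are weakly dense in $\tilde\sN$. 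Since $\Delta_{\hh\GG}(P_\omega)\in\tilde\sN\vtens\tilde\sN$ by the invariance already established, $R^{\hh\GG}$ sends each such slice to $(\id\otimes\mu\circ R^{\hh\GG})\Delta_{\hh\GG}(P_\omega)\in\tilde\sN$, and $R^{\hh\GG}(\tilde\sN)\subset\tilde\sN$ follows. In short, the idempotent state and its group-like projection are not a byproduct to be extracted after the Baaj--Vaes step; they are the tool that makes that step go through.
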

\begin{proof}
Using \cite[Theorem 4.2]{coid_sub_st} we conclude the existence of an idempotent state $\omega\in\C_0^u(\GG)^*$ such that  $\sN = E_\omega(\Linf(\GG))$.
Let $\tilde\sN$ be the codual coideal. Then, since $\sN$ is preserved by $\tau^\GG$, $\tilde\sN$ is preserved by $\tau^{\hh\GG}$ (see \cite[Proposition 3.2]{gpext}). Normality of $\sN$ is equivalent with $\Delta_{\hh\GG}(\tilde\sN)\subset\tilde\sN\vtens\tilde\sN$ (see \cite[Proposition 4.3]{gpext}). Moreover, using Theorem \ref{thmstrong} we see that    \[\sS = \{(\mu\otimes\id)(\Delta_{\hh\GG}(P_\omega)):\mu\in\Linf(\hh\GG)_*\}\] is weakly dense in $\tilde\sN$. 
Let us note that $R^{\hh\GG}((\mu\otimes\id)(\Delta_{\hh\GG}(P_\omega)) = (\id\otimes \mu\circ R^{\hh\GG})(\Delta_{\hh\GG}(P_\omega))$. Since $P_\omega\in\tilde\sN$ we have    $\Delta_{\hh\GG}(P_\omega)\in\tilde\sN\vtens\tilde\sN$ and we see that $R^{\hh\GG}(\sS)\subset\tilde\sN$. Thus we conclude that $R^{\hh\GG}(\tilde\sN)\subset\tilde\sN$. Summarizing $\tilde\sN$ forms a Baaj-Vaes subalgebra of $\Linf(\hh\GG)$ and there exists $\HH\subset\GG$ such that $\tilde\sN = \Linf(\hh\HH)$. Since $\sN = \Linf(\GG/\HH)$ is integrable, we  use \cite[Theorem A.3]{int} for concluding that $\HH$ is a compact quantum group. 
\end{proof}
\section{From group-like projections to idempotent states}\label{intro}
Let $\psi$ be an n.s.f. weight on a von Neumann algebra $\sN$ and $\sigma:\mathbb{R}\to\Aut(\sN)$  the KMS-group of automorphisms assigned to $\psi$. 
We denote 
\[\mathcal{T}_\psi = \{x\in\mathcal{N}_\psi\cap\mathcal{N}_\psi^*:x \textrm{ is } \sigma-\textrm{analytic and }  \sigma_z(x)\in\mathcal{N}_\psi\cap\mathcal{N}_\psi^* \textrm{ for all } z\in\mathbb{C}\}.\]
Note that if $x\in \mathcal{T}_\psi$ then $\sigma_z(x)\in\mathcal{T}_\psi$ for all $z\in \mathbb{C}$.  
Let us recall that  the KMS-condition for $\sigma$ yields that if  $x\in\mathcal{N}_\psi\cap \Dom(\sigma_{\frac{i}{2}})$ then $\sigma_{\frac{i}{2}}(x)^*\in\mathcal{N}_\psi$ and
\begin{equation}\label{kms_eq}\psi(x^*x) = \psi(\sigma_{\frac{i}{2}}(x)\sigma_{\frac{i}{2}}(x)^*).\end{equation}
\begin{lemma}\label{lem1}
Let $x\in\mathcal{T}_\psi$ and suppose that $y$ is $\sigma$-analytic. Then $yx\in\mathcal{T}_\psi$
\end{lemma}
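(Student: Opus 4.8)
The plan is to verify the three defining conditions for membership in $\mathcal{T}_\psi$: that $yx$ lies in $\mathcal{N}_\psi\cap\mathcal{N}_\psi^*$, that $yx$ is $\sigma$-analytic, and that $\sigma_z(yx)\in\mathcal{N}_\psi\cap\mathcal{N}_\psi^*$ for every $z\in\mathbb{C}$. The analyticity is immediate: since both $x$ and $y$ are $\sigma$-analytic, so is their product, with $\sigma_z(yx)=\sigma_z(y)\sigma_z(x)$ for all $z\in\mathbb{C}$ (the entire function $z\mapsto\sigma_z(y)\sigma_z(x)$ extends the orbit of $yx$, using joint continuity of multiplication on bounded sets). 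So it suffices to show $\sigma_z(y)\sigma_z(x)\in\mathcal{N}_\psi\cap\mathcal{N}_\psi^*$ for all $z$; taking $z$ real plus imaginary parts freely, and noting $\sigma_w(x)\in\mathcal{T}_\psi$ for all $w\in\mathbb{C}$, it is in fact enough to prove the single statement: if $a$ is any $\sigma$-analytic element and $t\in\mathcal{T}_\psi$, then $at\in\mathcal{N}_\psi\cap\mathcal{N}_\psi^*$.

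For this, I would use the standard fact that $\mathcal{N}_\psi$ is a left ideal in $\sN$, hence $at=a t\in\mathcal{N}_\psi$ since $t\in\mathcal{N}_\psi$ and $a\in\sN$. This already gives $at\in\mathcal{N}_\psi$. For the adjoint, $(at)^*=t^*a^*$; here I cannot just invoke the ideal property since $a^*$ is on the wrong side. Instead I would use the KMS relation \eqref{kms_eq}: since $t\in\mathcal{T}_\psi$, in particular $\sigma_{-i/2}(t)\in\mathcal{N}_\psi\cap\mathcal{N}_\psi^*$, and the computation $\psi\bigl((t^*a^*)^*(t^*a^*)\bigr)=\psi\bigl(at t^* a^*\bigr)$ should be estimated by passing the $a$'s to the other side via $\sigma_{i/2}$. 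Concretely, using the KMS identity in the form $\psi(z^*z)=\psi\bigl(\sigma_{-i/2}(z)\sigma_{-i/2}(z)^*\bigr)$ applied to $z = t^*a^*$ (valid since $t^*a^*$ is $\sigma$-analytic — $t^*$ is, being in $\mathcal{T}_\psi^* =\mathcal{T}_\psi$, and $a^*$ is $\sigma$-analytic), one gets $\psi\bigl((t^*a^*)^*(t^*a^*)\bigr)=\psi\bigl(\sigma_{-i/2}(t^*a^*)\sigma_{-i/2}(t^*a^*)^*\bigr)=\psi\bigl(\sigma_{-i/2}(t)^*\sigma_{-i/2}(a)^*\sigma_{-i/2}(a)\sigma_{-i/2}(t)\bigr)\le\|\sigma_{-i/2}(a)\|^2\,\psi\bigl(\sigma_{-i/2}(t)^*\sigma_{-i/2}(t)\bigr)<\infty$, where finiteness of the last term holds because $\sigma_{-i/2}(t)\in\mathcal{N}_\psi$. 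Hence $t^*a^*\in\mathcal{N}_\psi$, i.e. $at\in\mathcal{N}_\psi^*$.

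The main obstacle is the adjoint computation: one has to be careful that $t^*a^*$ really is $\sigma$-analytic (so that the KMS relation \eqref{kms_eq} applies to it) and that all the applications of $\sigma_{-i/2}$ commute with taking adjoints in the expected way, namely $\sigma_z(b)^*=\sigma_{\bar z}(b^*)$, so that $\sigma_{-i/2}(t^*a^*)^* = \sigma_{i/2}(at)=\sigma_{i/2}(a)\sigma_{i/2}(t)$ and the norm bound $\|\sigma_{-i/2}(a)\|<\infty$ follows from $\sigma$-analyticity of $a$. Once this is set up, applying the displayed argument with $a$ replaced by each of $\sigma_s(y)$ (with $s$ real) and $t$ replaced by each $\sigma_w(x)\in\mathcal{T}_\psi$, and combining with analyticity, yields $\sigma_z(yx)\in\mathcal{N}_\psi\cap\mathcal{N}_\psi^*$ for all $z\in\mathbb{C}$, completing the proof that $yx\in\mathcal{T}_\psi$.
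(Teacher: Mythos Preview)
Your argument is correct and follows essentially the same route as the paper: analyticity of the product is immediate, membership in $\mathcal{N}_\psi$ uses the left-ideal property, membership in $\mathcal{N}_\psi^*$ is obtained via the KMS relation \eqref{kms_eq} and the bound $\psi(\sigma_{-i/2}(t)^*\sigma_{-i/2}(a)^*\sigma_{-i/2}(a)\sigma_{-i/2}(t))\le\|\sigma_{-i/2}(a)\|^2\,\psi(\sigma_{-i/2}(t)^*\sigma_{-i/2}(t))<\infty$, and then one replaces $x,y$ by $\sigma_z(x),\sigma_z(y)$. (One harmless slip: the form of \eqref{kms_eq} in the paper has $\sigma_{i/2}$, not $\sigma_{-i/2}$; after correctly expanding $\sigma_{i/2}((at)^*)=\sigma_{-i/2}(at)^*$ you land on exactly the displayed inequality, so the conclusion is unaffected.)
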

\begin{proof} Let $x\in\mathcal{T}_\psi$. 
Clearly $yx$ is  $\sigma$-analytic. Since $\mathcal{N}_\psi$ forms a left ideal in $\sN$ we have  $yx\in \mathcal{N}_\psi$. 
Moreover $(yx)^*$ is also $\sigma$-analytic and 
\begin{align*} \psi((yx)^{**}(yx)^*)&=
\psi(\sigma_{\frac{i}{2}}((yx)^*)\sigma_{\frac{i}{2}}((yx)^*)^*)
\\
&=\psi(\sigma_{-\frac{i}{2}}(yx)^*\sigma_{-\frac{i}{2}}(yx))\\
&=\psi(\sigma_{-\frac{i}{2}}(x)^*\sigma_{-\frac{i}{2}}(y)^*\sigma_{-\frac{i}{2}}(y)\sigma_{-\frac{i}{2}}(x))
\\&\leq \|\sigma_{-\frac{i}{2}}(y)\|^2\psi(\sigma_{-\frac{i}{2}}(x)^*\sigma_{-\frac{i}{2}}(x))<\infty.
\end{align*}
Thus  we get  $yx\in\mathcal{N}_\psi\cap\mathcal{N}_\psi^*$. Replacing $x$ with $\sigma_z(x)$ and $y$ with $\sigma_z(y)$ in the above reasoning, we conclude that $\sigma_z(yx)\in\mathcal{N}_\psi\cap\mathcal{N}_\psi^*$.  Thus $yx\in\mathcal{T}_\psi$ and we are done. 
\end{proof}
\begin{remark}\label{rem1}
Let $\GG$ be a locally compact quantum group. 
In the course of the proof of the next theorem, the symbol $\hat\eta$ denotes the GNS map for the Haar weight $\hat\psi$ on $\hat\GG$. We will use the fact that if $a,b\in\mathcal{T}_{\hat\psi}$ then the slice   $(\mu_{\hat\eta(a),\hat\eta(b)}\otimes \id)(\ww)$ is an element of $\mathcal{N}_{\psi}$ (see Lemma 8.4 and Proposition 8.13   of \cite{KV} with the roles of $\GG$ and $\hat\GG$ reversed). 
\end{remark}
\begin{theorem}\label{glptoidst}
Let $\GG$ be a locally compact quantum group and let $P\in\Linf(\hh\GG)$ be a non-zero group-like projection
 such that  $\tau^{\hat\GG}_t(P) = P$ for all $t\in\mathbb{R}$.  
Then there exists an idempotent state $\omega\in\C_0^u(\GG)$ such that $P = (\id\otimes\omega)(\wW)$. 
\end{theorem}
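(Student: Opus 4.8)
The plan is to reverse the construction of the previous section: starting from a group-like projection $P\in\Linf(\hh\GG)$ that is fixed by the scaling group $\tau^{\hh\GG}$, I want to produce an idempotent state $\omega$ on $\C_0^u(\GG)$ whose associated projection $P_\omega=(\id\otimes\omega)(\wW^\GG)$ equals $P$. First I would form the candidate coideal $\sN = \{x\in\Linf(\GG): Px = xP\}$ inside $\Linf(\GG)$ and, dually, the candidate codual $\tilde\sN = \{y\in\Linf(\hh\GG):\Delta_{\hh\GG}(y)(\I\otimes P) = y\otimes P\}$; by the computation in the proof of Theorem \ref{lemma0} these two are each other's coduals, and $P$ is a minimal central projection of $\tilde\sN$. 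The key object is the normal character $\varepsilon_P:\tilde\sN\to\CC$ determined by $yP = \varepsilon_P(y)P$, which plays the role of the ``restriction of the counit'' to the coideal; the goal is to dilate $\varepsilon_P$ to a genuine idempotent state on $\C_0^u(\GG)$.

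The main technical step is to build the state $\omega$ concretely using the multiplicative unitary. Following Remark \ref{rem1}, for $a,b\in\mathcal{T}_{\hh\psi}$ the slice $(\mu_{\hh\eta(a),\hh\eta(b)}\otimes\id)(\ww^\GG)$ lies in $\mathcal{N}_\psi\subset\C_0(\GG)$, and such slices are dense enough to test against. I would define a functional on a suitable dense subalgebra by the formula $\omega((\mu\otimes\id)(\ww^\GG)) = \mu(P)$ for $\mu\in\Linf(\hh\GG)_*$ (more precisely using the vector functionals above), so that automatically $(\id\otimes\omega)(\ww^\GG) = P$ at the reduced level; the $\tau^{\hh\GG}$-invariance of $P$ is exactly what is needed to check that this $\omega$ is well-defined, bounded (indeed a state), and, crucially, lifts through the reducing morphism $\Lambda_\GG$ to a state on $\C_0^u(\GG)$ — this is where one invokes that $P$ is fixed by the scaling group, since a merely reduced group-like projection need not come from the universal algebra. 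To see $\omega$ is positive and normalized I would exploit that $P$ is a projection together with the KMS/strong left invariance identity \eqref{stronleftinv} and the pentagon equation, computing $\langle\eta_\GG(x),\eta_\GG(E(x))\rangle$-type expressions where $E$ is the conditional expectation onto $\sN$ built from $P$ via $\eta_\GG(E(x)) = P\eta_\GG(x)$.

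Once $\omega$ is a state on $\C_0^u(\GG)$ with $P_\omega = P$, the idempotence $\omega\staru\omega=\omega$ should follow from the group-like relation $\Delta_{\hh\GG}(P)(\I\otimes P) = P\otimes P$: translating through $\wW^\GG$ and the definition of $\staru$, the convolution square of $\omega$ corresponds to $(\id\otimes\id\otimes\omega\staru\omega)$ applied to the appropriate leg of $\wW^\GG$, and the group-like identity collapses $P\otimes P$ back to $P$. Concretely, $P_{\omega\staru\omega} = (\id\otimes\omega\staru\omega)(\wW^\GG)$ can be rewritten using $(\Delta_{\hh\GG}^u\otimes\id)$-type relations and \eqref{LDDrL} so that it equals $(\id\otimes\omega)((\id\otimes\omega)(\wW^\GG_{13}\wW^\GG_{23}))$, and the pentagon plus $\Delta_{\hh\GG}(P)(\I\otimes P)=P\otimes P$ forces this to be $P$ again; since the map $\omega\mapsto P_\omega$ is injective on states (as $\ww^\GG$ generates $\Linf(\GG)$), $\omega\staru\omega=\omega$. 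The hardest part, I expect, is the universal lifting: showing the functional defined by $\mu\mapsto\mu(P)$ on reduced slices is bounded and extends to $\C_0^u(\GG)^*$ rather than only $\C_0(\GG)^*$. The $\tau$-invariance hypothesis is precisely the hook for this — it lets one run the argument of \cite[Proposition 4]{SaS} (that idempotent states are scaling-group invariant) in reverse, or equivalently to show the associated conditional expectation $E$ commutes with $\tau^\GG$ and hence descends compatibly through $\Lambda_\GG$, so that $\omega = \varepsilon\staru(\text{something factoring through }\Lambda_\GG)$ makes sense universally.
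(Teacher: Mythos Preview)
Your proposal sets up the right objects — the coideal $\tilde\sN\subset\Linf(\hh\GG)$, the codual $\sN\subset\Linf(\GG)$, the minimal central projection $P$, the character $\varepsilon_P$ — but the central step, constructing $\omega$ directly by the rule $\omega\bigl((\mu\otimes\id)(\ww^\GG)\bigr)=\mu(P)$, has a real gap. This formula only defines a linear functional on the Fourier algebra (the image of $\Linf(\hh\GG)_*$ under slicing $\ww^\GG$), and you give no mechanism for showing it is bounded in the $\C_0(\GG)$-norm, let alone that it lifts to $\C_0^u(\GG)^*$. Saying ``the $\tau^{\hh\GG}$-invariance of $P$ is exactly what is needed'' is not an argument: $\tau$-invariance by itself does not produce a norm estimate $|\mu(P)|\leq\|(\mu\otimes\id)(\ww^\GG)\|$, and the passage ``run \cite[Proposition~4]{SaS} in reverse'' presupposes you already have a state. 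Likewise, invoking the conditional expectation $E$ via $\eta_\GG(E(x))=P\eta_\GG(x)$ is circular here: in Theorem~\ref{lemma0} that identity was \emph{derived} from the existence of $\omega$, not used to construct it.

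The paper's proof avoids this boundedness problem entirely. Instead of building $\omega$ by hand, it shows that the codual $\sN\subset\Linf(\GG)$ is an \emph{integrable} $\tau^\GG$-invariant coideal and then invokes \cite[Theorem~4.2]{coid_sub_st}, which already packages the existence of the idempotent state. The integrability is exactly where the slices you mention are used: from $\tau^{\hh\GG}$-invariance one first deduces $\sigma^{\hh\psi}_t(P)=P$ (via the argument of Theorem~\ref{lemma0}), so Lemma~\ref{lem1} gives $Pb\in\mathcal{T}_{\hh\psi}$ whenever $b\in\mathcal{T}_{\hh\psi}$, and then Remark~\ref{rem1} guarantees $(P\cdot\mu_{\hh\eta(a),\hh\eta(b)}\otimes\id)(\ww^\GG)\in\mathcal{N}_\psi$. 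These elements lie in $\sN$ and are not all zero, so $\sN\cap\mathcal{N}_\psi\neq\{0\}$, and right invariance of $\psi$ then gives integrability. Finally $P_\omega\geq P$ is immediate from $E_\omega$ fixing $\sN$, and minimality forces $P_\omega=P$. You had the ingredient $(\mu\otimes\id)(\ww)\in\mathcal{N}_\psi$ in hand but used it only as a ``dense enough to test against'' device; its actual role is to supply square-integrable elements of $\sN$ and thereby replace the missing boundedness argument with an appeal to the coideal machinery.
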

\begin{proof}
Let us consider  $\tilde\sN\subset\Linf(\hh\GG)$, where
\[\tilde\sN = \{y\in\Linf(\hh\GG):\Delta_{\hh\GG}(y)(\I\otimes P) = y\otimes P \textrm{ and } \Delta_{\hh\GG}(y^*)(\I\otimes P) = y^*\otimes P\}.\] We will show that $\tilde{\sN}$ forms a coideal in $\Linf(\hh\GG)$ and   we will focus on its codual  $\sN\subset\Linf(\GG)$.  
Let us first note that  $P\in\tilde\sN$ and $\tilde\sN$ is $\tau^{\hh\GG}$-invariant. Moreover it is easy to see that  $\tilde\sN$ is a von Neumann subalgebra of $\Linf(\hh\GG)$. Let us  check that $\tilde\sN$ forms a coideal of $\Linf(\hh\GG)$. For  $y\in \tilde\sN$ we have
\begin{align*}
    (\id\otimes\Delta_{\hh\GG})(\Delta_{\hh\GG}(y))(\I\otimes\I\otimes P)& =  ( \Delta_{\hh\GG}\otimes\id)(\Delta_{\hh\GG}(y))(\I\otimes\I\otimes P)\\ &=( \Delta_{\hh\GG}\otimes\id)(\Delta_{\hh\GG}(y)(\I\otimes P))\\ &=( \Delta_{\hh\GG}\otimes\id)(y\otimes P) = \Delta_{\hh\GG}(y)\otimes P.
\end{align*}
Similarly we show that $   (\id\otimes\Delta_{\hh\GG})(\Delta_{\hh\GG}(y)^*)(\I\otimes\I\otimes P)= \Delta_{\hh\GG}(y)^*\otimes P$ and we get $\Delta_{\hh\GG}(y)\in\Linf(\hh\GG)\vtens\tilde{\sN}$. Repeating the reasoning presented in the fourth paragraph of the  proof of Theorem  \ref{lemma0} we conclude that  $P$ is a minimal central projection of $\tilde\sN$. Using   $\tau_t^{\hh\GG}$ invariance of $P$ and repeating the reasoning presented in the fifth  paragraph of the proof of Theorem  \ref{lemma0}, we see that $\sigma_t^{\hh\psi}(P) = P$. In particular $P$ is $ \sigma^{\hh\psi}$ - analytic. 

Let   $\sN\subset \Linf(\GG)$ denote the codual of $\tilde\sN$. Since $\tilde{\sN}$ is preserved by $\tau^{\hh\GG}$, $\sN$ is preserved by $\tau^\GG$.  
Moreover following backward the reasoning presented in the third paragraph of the proof of Theorem  \ref{lemma0} we show that   $(P\cdot\mu\otimes\id)(\ww)\in \sN$ for all $\mu\in\Linf(\hh\GG)_*$.  

Let $a,b\in\mathcal{T}_{\hat\psi}$ and  
let us consider $\mu = \mu_{\hat\eta(a),\hat\eta(b)}\in \Linf(\hh\GG)_*$ and $x = (P\cdot\mu\otimes\id)(\ww)$ (note that $P\cdot\mu = \mu_{\hat\eta(a),\hat\eta(Pb)}$). Using Lemma \ref{lem1}, we see that $Pb\in\mathcal{T}_{\hat\psi}$. In particular, as explained in Remark \ref{rem1},   $x\in\mathcal{N}_\psi$. Clearly there exists $a,b\in\mathcal{T}_{\hat\psi}$ such that the  corresponding $x$ is non-zero. Indeed, suppose the converse holds: $ (P\cdot\mu_{\hat\eta(a),\hat\eta(b)}\otimes\id)(\ww) = 0$ for all $a,b\in\mathcal{T}_{\hat\psi}$. Then $ P\cdot\mu_{\hat\eta(a),\hat\eta(b)}(y) = 0$ for all $y\in\Linf(\hh\GG)$. Thus, taking $y =\I$ we get $( \hat\eta(a)|P\hat\eta(b)) = 0$ for all $a,b\in\mathcal{T}_{\hat\psi}$. Since $\hat\eta(\mathcal{T}_{\hat\psi})$ is dense in $\Ltwo(\GG)$, we conclude that $P = 0$, contradiction. In particular $\sN$ contains a nonzero element $x\in\sN\cap\mathcal{N}_\psi$. Since $(\psi\otimes\id)\Delta_\GG(x^*x) = \psi(x^*x)$ we see that $\sN$ contains a non-zero   integrable element with respect to the action $\Delta_{\GG}|_\sN$ and using  \cite[Proposition 3.2.]{int} we conclude that $\sN$ is integrable.  

Summarizing, $\sN$ is an integrable coideal of  $\Linf(\GG)$ preserved by $\tau^\GG$. Using \cite[Theorem 4.2]{coid_sub_st} we see that there exists an idempotent state $\omega\in\C_0^u(\GG)^*$ such that  $\sN = E_\omega(\Linf(\GG))$, where $E_\omega$ is the conditional expectation assigned to $\omega$. 

Let $P_\omega = (\id\otimes\omega)(\wW)$. Then $P_\omega\in\tilde\sN$ is a minimal central projection. Moreover, 
\[(P\cdot\mu\otimes\id)(\ww) = E_\omega((P\cdot\mu\otimes\id)(\ww)) = (P_\omega P\cdot\mu\otimes\id)(\ww)\] for all $\mu\in\Linf(\hh\GG)_*$. Thus $P = P_\omega P$ and we see that $P_\omega\geq P$. Using the minimality of $P_\omega$ we get $P_\omega = P$.

\end{proof}
\section*{Acknowledgements}  RF was partially  supported by a grant from Ferdowsi University of Mashhad-Graduate studies No.37365 and  wishes to appreciate  the Department of Mathematical Methods in Physics, Faculty of Physics, University of Warsaw for their warm hospitality.
PK was partially supported by the NCN (National Center of Science) grant
 2015/17/B/ST1/00085.

\end{document}